\documentclass[12pt]{amsart}

\usepackage{amsmath} 
\usepackage{amssymb} 
\usepackage{amsthm} 
\usepackage{dsfont} 
\usepackage{graphicx} 
\usepackage{enumerate} 


\newtheorem{theorem}{Theorem}[section]

\newtheorem{definition}[theorem]{Definition}
\newtheorem{example}[theorem]{Example}
\newtheorem{lemma}[theorem]{Lemma}

\begin{document}

\title[Alldifferent Kernel]{Characterization of the Alldifferent \\ Kernel by Hall Partitions \\ and a Calculation Method }

\author{Thomas Fischer}
\address{Frankfurt am Main, Germany.}

\email{dr.thomas.fischer@gmx.de}

\date{}     


\begin{abstract}
We consider a set-valued mapping between two finite sets and define the alldifferent kernel 
which describes the submapping of alldifferent selections. This submapping is characterized by
Hall partitions which are introduced in this paper. The existence of a Hall partition is equivalent 
to the Hall condition. The unicity of Hall partitions is proved and the unicity of an alldifferent 
selection is characterized. A calculation method for the determination of the Hall partition and
the alldifferent kernel is presented.
\end{abstract}


\keywords{alldifferent constraint, constraint satisfaction problem, set-valued mapping, 
               marriage theorem, mathematical programming, sudoku.}

\subjclass[2020]{Primary 05D15; Secondary 90C35}

\maketitle

                                        %
                                        %
\section{Introduction} \label{S:intro}
We consider finite nonempty sets $X$ and $Y$, a set-valued mapping $F: X \longrightarrow 2^Y$ 
and we ask for the existence of an alldifferent selection for $F$. For the concept of set-valued 
mappings compare Berge \cite{Ber}. A selection for $F$ is a point-valued mapping 
$s: X \longrightarrow Y$ such that $s(x) \in F(x)$ for each $x \in X$.

\begin{definition} \label{D:alldifferent}
A point-valued mapping $s: X \longrightarrow Y$ is called alldifferent if $x_1 \ne x_2$ implies 
$s(x_1) \ne s(x_2)$ for each $x_1, x_2 \in X$.
\end{definition}

There exist various notations for alldifferent mappings. Sometimes they are called one-to-one 
mappings, a transversal, a choice function or a system of distinct representatives. 

The term alldifferent is widely used in the theory of constraint satisfaction problems. See 
Dechter and Rossi \cite{DR} for an overview on constraint satisfaction problems and see 
Hoeve \cite{Hoe} for an overview on alldifferent constraints. 

\begin{definition} \label{D:Hall}
A set-valued mapping $F$ satisfies the Hall condition if $\sharp F(W) \ge \sharp W$ for each 
subset $W \subset X$.
\end{definition}

The marriage theorem of P. Hall \cite{HalP} characterizes the existence of an alldifferent selection 
for $F$. His theorem states that $F$ admits an alldifferent selection if and only if $F$ satisfies the 
Hall condition.

Based on this result and ideas in the proof of Halmos and Vaughan \cite{HV} we define 
critical and non-reducible sets and the complement mapping in Section \ref{S:complement}. These terms 
are a necessary requirement in the definition of Hall partitions. Our main result in Section \ref{S:partition} 
is that there exists a Hall partition for $F$ if and only if $F$ satisfies the Hall condition. 

In Section \ref{S:kernel} we consider the alldifferent kernel $F^*$ of $F$, the alldifferent selections of 
$F$. We show, how Hall partitions describe $F^*$. In particular this result contains a new version 
of the marriage theorem.

We present a result, when $F$ satisfies $F^*=F$ (Section \ref{S:alldifferent}). 
We also show, that a Hall partition is uniquely determined (Section \ref{S:unicity1}) and characterize
in Section \ref{S:unicity2}, when there exists a unique alldifferent selection.

In Section \ref{S:algorithm} we define a calculation method which determines the alldifferent kernel of 
$F$. This method calculates a Hall partition of $F$ in finitely many steps if and only if the Hall 
condition is satisfied. All possible exits of the calculation method are analyzed. 

The results are illustrated with examples and are applied to Sudoku puzzles. 

Finally we collect some basic terms and notations. A partition of a set is the disjoint union of nonempty 
subsets. For a given set $W \subset X$ we define $F(W) = \cup_{x \in W} F(x)$. The restriction of $F$ 
on a subset $W \subset X$ is denoted by $F_{\mid W}$. A submapping of $F$ is another set-valued 
mapping $F^\prime: X \longrightarrow 2^Y$ with the property $F^\prime (x) \subset F(x)$ for each 
$x \in X$, where equality is allowed. Given a submapping $F^\prime$ and a selection $s$ of $F$ such
that $F^\prime(x) = \{s(x)\}$ we identify $s$ with the mapping $F^\prime$. In this sense $F^\prime$ 
is a point-valued mapping.

We (partially) order set-valued mappings by set inclusion, i.e., a set-valued mapping
$F: X \longrightarrow 2^Y$ is said to be larger than another set-valued mapping 
$G: X \longrightarrow 2^Y$ if $G$ is a submapping of $F$. A mapping $F$ in a 
set $\mathbf{G}$ of set-valued mappings is said to be the largest mapping in $\mathbf{G}$
if $F$ is larger than each $G \in \mathbf{G}$.

The symbol $\sharp$ denotes the number of elements (cardinality) of a finite set. The set of positive 
integers in denoted by $\mathbb{N}$. The union over an empty index set is considered to be empty. 
The sum over an empty index set is considered to be zero.

In all figures of this paper (except Fig. \ref{Fig5}) the $x$- and $y$-axis are scaled by the values 
in $X$ and $Y$ which are positive integers starting with $1$.

                                        %
                                        %
\section{The Complement Mapping} \label{S:complement}
We introduce critical and non-reducible sets, the complement mapping and show the relation
between these terms. These definitions will be later used in the definition of Hall partitions.

We introduce a complement mapping $F_{W,Z}: X \backslash W \longrightarrow 2^{Y \backslash Z}$
of $F$ depending on subsets $W \subset X$ and $Z \subset Y$ by $F_{W,Z} (x) = F(x) \backslash Z$ 
for each $x \in X \backslash W$. $F_{W,Z}$ is a submapping of $F_{\mid (X \backslash W)}$. 
In particular, $F_{\emptyset , \emptyset} = F$. It may happen, that $F_{W,Z}$ has empty images. 
We write $F_W$ instead of  $F_{W,F(W)}$. 

\begin{lemma} \label{L:21}
Let $W \subset X$. The following statements hold: 
\begin{enumerate}[(i)] 
\item $(X \backslash W) \backslash V = X \backslash (W \cup V)$ for each $V \subset X \backslash W$. 
\item $(F_W)_V = F_{W \cup V}$ for each $V \subset X \backslash W$.  
\item $F(X) = F(W) \cup F_W(X \backslash W)$.
\item $F(W) \cap F_W(X \backslash W) = \emptyset$.
\end{enumerate}
\end{lemma}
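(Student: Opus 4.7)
The proof plan is to dispatch the four parts by directly unfolding the definitions introduced just before the lemma; no deep reasoning is required, as the claims are essentially bookkeeping for the complement mapping. For part (i), I would observe that this is a purely set-theoretic identity: $x\in(X\setminus W)\setminus V$ iff $x\in X$, $x\notin W$, and $x\notin V$, iff $x\in X\setminus(W\cup V)$. The hypothesis $V\subset X\setminus W$ is not actually used here; it enters in (ii) to guarantee that $W\cup V$ is the correct index set and that the two mappings compared there have the same domain.

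For part (ii), by (i) both $(F_W)_V$ and $F_{W\cup V}$ have domain $X\setminus(W\cup V)$. For $x$ in this common domain, the left side unfolds as $(F_W)_V(x)=F_W(x)\setminus F_W(V)=(F(x)\setminus F(W))\setminus F_W(V)$, while $F_{W\cup V}(x)=F(x)\setminus F(W\cup V)$. The single nontrivial check is the identity $F(W)\cup F_W(V)=F(W\cup V)$. I would prove this by first computing $F_W(V)=\bigcup_{x\in V}(F(x)\setminus F(W))=F(V)\setminus F(W)$, from which $F(W)\cup F_W(V)=F(W)\cup F(V)=F(W\cup V)$ follows immediately.

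Part (iii) follows by writing $F(X)=F(W)\cup F(X\setminus W)$ and splitting $F(X\setminus W)$ into $(F(X\setminus W)\cap F(W))\cup F_W(X\setminus W)$; the first piece is absorbed into $F(W)$, yielding the stated union. Part (iv) is immediate from the definition $F_W(x)=F(x)\setminus F(W)\subset Y\setminus F(W)$, which forces $F_W(X\setminus W)\subset Y\setminus F(W)$, hence disjoint from $F(W)$. The only step requiring any care is the rewriting $F_W(V)=F(V)\setminus F(W)$ in (ii), which depends on the fact that $F_W$ was defined as a complement mapping (with $Z=F(W)$), not merely as the restriction $F_{\mid X\setminus W}$.
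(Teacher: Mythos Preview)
Your proposal is correct and takes essentially the same approach as the paper: the paper's proof simply says part (i) is elementary set theory and parts (ii)--(iv) follow from the definition of $F_W$, and what you have written is precisely the explicit unfolding of those definitions. Your observation that the hypothesis $V\subset X\setminus W$ is not needed for (i) but is needed in (ii) to match the domains is accurate, and the key computation $F_W(V)=F(V)\setminus F(W)$ is handled correctly.
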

\begin{proof}
The equation ``(i)" is elementary set theory.  The remaining statements follow from the definition of $F_W$.
\end{proof} 

\begin{definition} \label{D:critical}
A subset $W \subset X$ is called a critical set of $F$ if $W$ is nonempty and $\sharp F(W) =\sharp W$.
\end{definition}

Critical sets had been considered by Easterfield \cite{Eas} and he called them ``exactly adjusted". The term 
critical had been introduced by M. Hall \cite{HalM}. Schrijver \cite{Sch} called them tight sets. The 
consideration of critical subsets had been used by Crook \cite{Cro} and Provan \cite{Pro} in their 
description of a strategy solving Sudoku puzzles. Crook used the term ``preemptive" set and Provan 
used the term ``pigeon-hole rule". 

\begin{example} \label{E:21}
On the left side in Fig. \ref{Fig1} the set $W = \{1, 2\}$ is a critical set of $F$, since 
$\sharp F(\{1, 2\}) = \sharp \{1, 2\}$. On the right side the set $W = \{3, 4\}$ is a critical set of $F$. 
In both cases $X \backslash W$ is not a critical set of $F$.
\end{example}

The next lemma states an argument which had been used by Halmos and Vaughan \cite{HV} in
their proof of the marriage theorem.

\begin{lemma} \label{L:23}
Let $F$ satisfy the Hall condition and let $W \subset X$ be a critical set of $F$. $F_W$ satisfies
the Hall condition.
\end{lemma}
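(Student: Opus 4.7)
The plan is to verify the Hall condition for $F_W$ directly from the definition by choosing an arbitrary subset $V \subset X \setminus W$ and bounding $\sharp F_W(V)$ from below by $\sharp V$. The bridge between the two Hall conditions is the disjoint union identity from Lemma \ref{L:21}: $F(W \cup V) = F(W) \cup F_W(V)$ and $F(W) \cap F_W(V) = \emptyset$, which lets us translate cardinalities between $F$ and $F_W$.

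First I would fix an arbitrary $V \subset X \setminus W$ and apply the Hall condition of $F$ to the subset $W \cup V \subset X$, which is a disjoint union since $V \cap W = \emptyset$. This yields
\[
\sharp F(W \cup V) \;\ge\; \sharp(W \cup V) \;=\; \sharp W + \sharp V.
\]
Next I would rewrite the left-hand side using Lemma \ref{L:21}(iii) applied to the restriction of $F$ to $W \cup V$, namely $F(W \cup V) = F(W) \cup F_W(V)$, together with the disjointness statement in Lemma \ref{L:21}(iv), to conclude
\[
\sharp F(W \cup V) \;=\; \sharp F(W) + \sharp F_W(V).
\]
Then I would invoke the hypothesis that $W$ is critical, i.e.\ $\sharp F(W) = \sharp W$, to replace $\sharp F(W)$ by $\sharp W$ and cancel it from both sides. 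This gives $\sharp F_W(V) \ge \sharp V$, which is exactly the Hall condition for $F_W$.

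I do not expect a real obstacle here; the proof is essentially bookkeeping, and the only point that needs care is making sure Lemma \ref{L:21}(iii)–(iv) is applied to the correct mapping. Specifically, one has to apply it not to $F$ on $X$ but to the restriction $F_{\mid W \cup V}$, whose image over $V \subset (W \cup V) \setminus W$ under the associated complement mapping coincides with $F_W(V)$ because $F_W(x) = F(x) \setminus F(W)$ is defined pointwise and $F(V) \setminus F(W) = F_W(V)$. Once this identification is made, the inequality chain above closes the argument in a single line.
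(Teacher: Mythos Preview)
Your proof is correct and follows essentially the same route as the paper: fix $V \subset X \setminus W$, apply Hall to $W \cup V$, use the disjoint decomposition $F(W \cup V) = F(W) \cup F_W(V)$ from Lemma~\ref{L:21}, and cancel $\sharp F(W) = \sharp W$ via criticality to obtain $\sharp F_W(V) \ge \sharp V$. Your remark about applying Lemma~\ref{L:21}(iii)--(iv) to the restriction $F_{\mid W \cup V}$ rather than to $F$ on all of $X$ is a fair observation (the paper glosses over this), but as you note the needed identity $F(W \cup V) = F(W) \cup F_W(V)$ with disjoint union follows directly from the pointwise definition of $F_W$.
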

\begin{proof} 
Let $W \subset X$ be a critical set of $F$ and let $V \subset X \backslash W$. By assumption
$\sharp F(W \cup V) \ge \sharp (W \cup V) = \sharp W + \sharp V$. By Lemma \ref{L:21},
$F(W \cup V) = F(W) \cup F_W(V)$ and $F(W) \cap F_W(V) = \emptyset$. This implies
\[
\sharp F_W(V) = \sharp F(W \cup V) - \sharp F(W) \ge \sharp W + \sharp V - \sharp W = \sharp V,
\]
i.e., $F_W$ satisfies the Hall condition.
\end{proof} 

\begin{figure}
\includegraphics{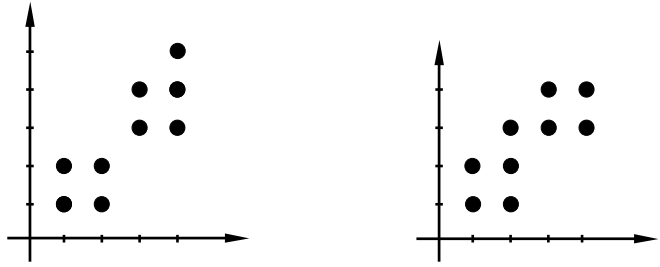}
\caption{Critical and non-reducible sets} 
\label{Fig1}
\end{figure}

\begin{definition} \label{D:non-reducible}
A subset $W \subset X$ is called a non-reducible set of $F$ if $W$ is nonempty and there does not exist a 
proper subset $W^\prime \subset W$ such that $W^\prime$ is a critical set of $F$.
\end{definition}

Each singleton is a non-reducible set. In the preceeding definition, we do not require, that $W$ is a 
critical set. Also the idea of non-reducible sets had been used by Halmos and Vaughan \cite{HV} in
their proof of the marriage theorem.

\begin{example} \label{E:22}
On the left in Fig. \ref{Fig1} the set $W=\{3,4\}$ is a non-reducible set of $F$, but $W$ is not critical. 
Define $W_1 = \{1,2\}$, then $W=\{3,4\}$ is not a critical 
set of $F_{W_1}$. On the right $W=\{1,2\}$ is also not a critical set of $F$. Define $W_1 = \{3,4\}$,
then $W=\{1,2\}$ is a critical and non-reducible set of $F_{W_1}$.
\end{example}

Each critical set contains a subset which is critical and non-reducible.

\begin{lemma} \label{L:25}
Let $W \subset X$ be a critical set of $F$. There exists a critical and non-reducible set $V \subset W$ of $F$. 
\end{lemma}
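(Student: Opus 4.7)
The plan is to produce the desired $V$ by taking a minimal critical subset of $W$, where minimality is measured by cardinality. Since $X$ is finite, every nonempty collection of subsets of $X$ has an element of smallest cardinality, so this step is unproblematic.

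More concretely, I would first consider the collection
\[
\mathcal{C} = \{ U \subset W : U \text{ is a critical set of } F\}.
\]
This collection is nonempty because $W \in \mathcal{C}$ by hypothesis. I would then choose $V \in \mathcal{C}$ with $\sharp V$ minimal among elements of $\mathcal{C}$. By construction $V \subset W$ and $V$ is critical.

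It remains to verify that $V$ is non-reducible. Suppose for contradiction there exists a proper subset $V' \subsetneq V$ that is a critical set of $F$. Since $V \subset W$, we would also have $V' \subset W$, so $V' \in \mathcal{C}$ with $\sharp V' < \sharp V$, contradicting the minimal choice of $V$. Hence no such $V'$ exists, and by Definition \ref{D:non-reducible} the set $V$ is non-reducible.

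There is essentially no main obstacle here; the argument is just the standard ``pick a minimal counterexample'' trick made positive, and it relies only on the finiteness of $X$ and on reading off the definitions of critical and non-reducible set. The only mild care is to note that the sets being compared by minimality are forced to be subsets of $W$, so a hypothetical smaller critical subset of $V$ automatically lands back in the collection $\mathcal{C}$.
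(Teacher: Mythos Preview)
Your proof is correct and follows essentially the same approach as the paper: both take a critical subset $V \subset W$ of minimal cardinality and observe that minimality forces non-reducibility. You simply spell out the contradiction argument that the paper leaves implicit.
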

\begin{proof} 
Consider the nonempty collection of all critical sets $V \subset W$ of $F$. We choose a critical
set $V \subset W$ with the least number of elements, i.e., $\sharp V \le \sharp Z$ for each critical 
set $Z \subset W$. This $V$ is a critical and non-reducible set of $F$.
\end{proof} 

The non-reducible property implies the Hall condition.

\begin{lemma} \label{L:26}
Let $F$ have nonempty images and let $X$ be a non-reducible set of $F$.
\begin{enumerate}[(i)] 
\item $F$ satisfies the Hall condition.
\item $F_{\{x\},\{y\}}$ satisfies the Hall condition for each $x \in X$ and $y \in F(x)$. 
\end{enumerate}
\end{lemma}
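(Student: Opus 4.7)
The plan is to establish (i) first, since (ii) will follow as a short counting argument once (i) is known together with the non-reducibility of $X$.

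For (i), I would argue by a minimal counterexample. Suppose that some nonempty $W \subset X$ violates the Hall condition, i.e., $\sharp F(W) < \sharp W$, and choose such a $W$ of least cardinality. Because $F$ has nonempty images, every singleton $\{x\}$ satisfies $\sharp F(\{x\}) \ge 1$, so necessarily $\sharp W \ge 2$. Pick any $x_0 \in W$ and set $W^\prime = W \setminus \{x_0\}$. By minimality, $\sharp F(W^\prime) \ge \sharp W^\prime$; on the other hand $F(W^\prime) \subset F(W)$ gives $\sharp F(W^\prime) \le \sharp F(W) \le \sharp W - 1 = \sharp W^\prime$, so $\sharp F(W^\prime) = \sharp W^\prime$. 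Thus $W^\prime$ is a critical set. Since $W \subset X$ and $x_0 \in W \subset X$, the set $W^\prime$ is a nonempty proper subset of $X$, which contradicts the assumption that $X$ is non-reducible.

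For (ii), fix $x \in X$ and $y \in F(x)$, and let $V \subset X \setminus \{x\}$. If $V = \emptyset$ the Hall condition is trivial. If $V \ne \emptyset$, then $V$ is a nonempty proper subset of $X$ (because $x \notin V$), so by the non-reducibility of $X$ it is not critical, i.e., $\sharp F(V) \ne \sharp V$. Combined with (i) this forces the strict inequality $\sharp F(V) \ge \sharp V + 1$. Since $F_{\{x\},\{y\}}(V) = F(V) \setminus \{y\}$ differs from $F(V)$ by at most one element, I conclude $\sharp F_{\{x\},\{y\}}(V) \ge \sharp F(V) - 1 \ge \sharp V$, which is the required Hall inequality.

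The only real obstacle is keeping the bookkeeping in (i) clean: one has to ensure that the critical subset produced by the minimality argument is genuinely a \emph{proper} subset of $X$, since non-reducibility only excludes critical proper subsets. Because $W^\prime$ is obtained from $W \subset X$ by removing a point of $X$, this is automatic, and the rest of the proof is essentially a one-line count.
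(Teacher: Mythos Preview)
Your argument is correct and follows essentially the same route as the paper: the paper proves (i) by induction on $\sharp W$ (removing a point and using that the smaller set is not critical to gain the extra $+1$), which is the direct form of your minimal-counterexample argument, and its proof of (ii) is the same one-line count you give. The only cosmetic difference is induction versus well-ordering; the substantive idea is identical.
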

\begin{proof} 
``(i)" Let $W \subset X$. We prove by induction on the number of elements $\sharp W$  of $W$ that
$\sharp F(W) \ge \sharp W$. This is true for $\sharp W = 1$, since $F$ has nonempty images.

Let $\sharp W > 1$. Choose some $x \in W$. By induction hypothesis
$\sharp F(W \backslash \{x\}) \ge \sharp (W \backslash \{x\})$ and this implies
$\sharp F(W \backslash \{x\}) \ge \sharp (W \backslash \{x\}) + 1$, since $W \backslash \{x\} \subset X$ is 
not a critical set of $F$. Consequently, 
$\sharp F(W) \ge \sharp F(W \backslash \{x\}) \ge \sharp (W \backslash \{x\}) + 1 = \sharp W$.

``(ii)"  Let $x \in X$, $y \in F(x)$ and $W \subset X \backslash \{x\}$. By ``(i)", 
$\sharp F(W) \ge \sharp W$. Since X is a non-reducible set of $F$, 
$\sharp F(W) \ge \sharp W + 1$.
This implies $\sharp F_{\{x\},\{y\}}(W) = \sharp (F(W) \backslash \{y\}) \ge \sharp F(W) - 1 \ge \sharp W$,
i.e., $F_{\{x\},\{y\}}$ satisfies the Hall condition.
\end{proof}

                                        %
                                        %
\section{The Hall Partition} \label{S:partition}
We consider the relation between partitions of $X$ and the complement mapping. Based on these 
properties we define Hall partitions.

\begin{lemma} \label{L:31}
Let $W_1, \ldots, W_m$, $m \in \mathbb{N}$, be a partition of $X$. 
\begin{enumerate}[(i)]
\item $F(W_i) \cap F_{\bigcup_{j=1}^{k-1} W_j}(W) = \emptyset$ for 
$W \subset X \backslash \bigcup_{j=1}^{k-1}W_j$ and $1 \le i < k \le m$,  
\item $F_{\bigcup_{j=1}^{i-1} W_j}(W_i) \cap F_{\bigcup_{j=1}^{i} W_j}(W) = \emptyset$ 
for $W \subset X \backslash \bigcup_{j=1}^{i}W_j$ and $i= 1, \ldots, m - 1$, and
\item $F_{\bigcup_{j=1}^{i-1} W_j}(W_i) \cap F_{\bigcup_{j=1}^{k-1} W_j}(W_k) = \emptyset$ 
for $i, k = 1, \ldots, m, i \ne k$.
\end{enumerate}
\end{lemma}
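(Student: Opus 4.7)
The plan is to reduce each of the three disjointness statements to the basic identity $F(W) \cap F_W(X \setminus W) = \emptyset$ from Lemma \ref{L:21}(iv), together with the ``iteration'' identity $(F_W)_V = F_{W \cup V}$ from Lemma \ref{L:21}(ii). Throughout I would write $A_k := \bigcup_{j=1}^{k-1} W_j$ for brevity, noting that the partition assumption gives $W_i \subset A_k$ whenever $i < k$, and $X \setminus A_k = W_k \cup W_{k+1} \cup \cdots \cup W_m$.

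For part (i), fix $1 \le i < k \le m$ and $W \subset X \setminus A_k$. By the partition property $W_i \subset A_k$, so $F(W_i) \subset F(A_k)$. On the other hand, by the very definition of $F_{A_k}$ as $F_{A_k,F(A_k)}$, every value of $F_{A_k}$ lies in $Y \setminus F(A_k)$; hence $F_{A_k}(W) \cap F(A_k) = \emptyset$, which is exactly Lemma \ref{L:21}(iv) applied to $F$ with the set $A_k$ (and its subset $W$). Intersecting with $F(W_i)$ gives the claim.

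For part (ii), I would apply the disjointness identity of Lemma \ref{L:21}(iv) not to $F$ but to the complement mapping $G := F_{A_i}$, taking its critical ingredient to be the set $W_i \subset X \setminus A_i$. This yields $G(W_i) \cap G_{W_i}(X \setminus (A_i \cup W_i)) = \emptyset$. By Lemma \ref{L:21}(ii), $G_{W_i} = (F_{A_i})_{W_i} = F_{A_i \cup W_i} = F_{A_{i+1}}$, and by Lemma \ref{L:21}(i), $X \setminus (A_i \cup W_i) = X \setminus A_{i+1}$. So any $W \subset X \setminus A_{i+1}$ gives $F_{A_i}(W_i) \cap F_{A_{i+1}}(W) = \emptyset$, which is the statement.

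For part (iii), I would reduce to the case $i < k$ by symmetry of intersection. Since $F_{A_i}(W_i) = F(W_i) \setminus F(A_i) \subset F(W_i)$, and since $W_k \subset X \setminus A_k$ by the partition assumption, part (i) (with the choice $W = W_k$) gives $F(W_i) \cap F_{A_k}(W_k) = \emptyset$, and hence $F_{A_i}(W_i) \cap F_{A_k}(W_k) = \emptyset$. The main ``obstacle,'' such as it is, is purely notational: keeping track of which union plays the role of $W$ in each invocation of Lemma \ref{L:21} so that parts (ii) and (iv) there can be applied with the correct substitutions; the mathematical content is already contained in the earlier lemma.
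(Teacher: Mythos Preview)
Your proof is correct and follows essentially the same approach as the paper: both arguments unwind the definition of the complement mapping to see that $F_{A_k}(W)$ avoids $F(A_k) \supset F(W_i)$, and then deduce (ii) and (iii) from this via the containment $F_{A_i}(W_i) \subset F(W_i)$. The only cosmetic difference is in (ii), where the paper reduces directly to (i) with $k=i+1$ using that containment, whereas you instead apply Lemma~\ref{L:21}(iv) to $G=F_{A_i}$ and then identify $G_{W_i}$ with $F_{A_{i+1}}$ via Lemma~\ref{L:21}(ii); both routes are equally short.
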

\begin{proof}
``(i)" Let $2 \le k \le m$ and $W \subset X \backslash \bigcup_{j=1}^{k-1}W_j$. By definition of the 
complement mapping $F_{\bigcup_{j=1}^{k-1}W_j}(W) = F(W) \backslash F(\cup_{j=1}^{k-1} W_j)$
and $F(W_i) \subset F(\cup_{j=1}^{k-1} W_j)$ for $1 \le i < k \le m$.  \\
``(ii)" Let $1 \le i \le m - 1$ and $W \subset X \backslash \bigcup_{j=1}^{i}W_j$. 
$F_{\bigcup_{j=1}^{i-1} W_j}(W_i) \subset F(W_i)$ and the statement follows from ``(i)"
with $k=i+1$.  \\
``(iii)" Let $1 \le i,k \le m$, $i \ne k$. W.l.o.g. $i < k$. 
$W_k \subset X \backslash \bigcup_{j=1}^{k-1}W_j$ and by definition of the complement mapping
$F_{\bigcup_{j=1}^{i-1} W_j}(W_i) \subset F(W_i)$. The statement follows from ``(i)". 
\end{proof} 

\begin{lemma} \label{L:32}
Let $W_1, \ldots, W_m$, $m \in \mathbb{N}$, be a partition of $X$, let $1 \le i \le m$ and 
$W \subset X \backslash \bigcup_{j=1}^{i-1}W_j$.
\begin{enumerate}[(i)]
\item $F(W \cup \bigcup_{j=1}^{i-1}W_j) 
= F_{\bigcup_{l=1}^{i-1} W_l}(W) \cup \bigcup_{j=1}^{i-1} F_{\bigcup_{l=1}^{j-1} W_l}(W_j)$ and  
\item $\sharp F(W \cup \bigcup_{j=1}^{i-1}W_j) 
= \sharp F_{\bigcup_{l=1}^{i-1} W_l}(W) + \sum_{j=1}^{i-1} \sharp F_{\bigcup_{l=1}^{j-1} W_l}(W_j)$.
\end{enumerate}
\end{lemma}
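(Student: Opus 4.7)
The plan is to prove (i) by induction on $i$ and then derive (ii) from (i) using Lemma \ref{L:31} to show that the union appearing in (i) is in fact a disjoint union.

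For the base case $i = 1$, the conventions that the empty union is empty and $F_\emptyset = F$ reduce the identity to $F(W) = F(W)$. For the inductive step, suppose (i) holds for some $i$ with $1 \le i \le m - 1$, and let $W \subset X \backslash \bigcup_{j=1}^{i} W_j$. Since $W$ and $W_i$ are both subsets of $X \backslash \bigcup_{j=1}^{i-1} W_j$, so is $W \cup W_i$, and I apply the inductive hypothesis to $W \cup W_i$. Writing $A = \bigcup_{l=1}^{i-1} W_l$ for brevity, this yields
\[
F\left( W \cup \bigcup_{j=1}^{i} W_j \right) = F_A(W \cup W_i) \cup \bigcup_{j=1}^{i-1} F_{\bigcup_{l=1}^{j-1} W_l}(W_j).
\]
The key step is to split $F_A(W \cup W_i)$. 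Applying Lemma \ref{L:21}(iii) to the complement mapping $F_A$ restricted to $W \cup W_i$, together with Lemma \ref{L:21}(ii) in the form $(F_A)_{W_i} = F_{A \cup W_i}$, gives
\[
F_A(W \cup W_i) = F_A(W_i) \cup F_{\bigcup_{l=1}^{i} W_l}(W).
\]
Substituting this back recovers exactly the statement for $i + 1$, with the term $F_A(W_i) = F_{\bigcup_{l=1}^{i-1} W_l}(W_i)$ becoming the new $j = i$ summand.

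For (ii), I need to verify that the right-hand side of (i) is a disjoint union. Pairwise disjointness among the sets $F_{\bigcup_{l=1}^{j-1} W_l}(W_j)$ for $j = 1, \ldots, i-1$ is Lemma \ref{L:31}(iii). For each $j < i$, disjointness of $F_{\bigcup_{l=1}^{j-1} W_l}(W_j)$ from $F_{\bigcup_{l=1}^{i-1} W_l}(W)$ follows from the inclusion $F_{\bigcup_{l=1}^{j-1} W_l}(W_j) \subset F(W_j)$ combined with Lemma \ref{L:31}(i), which ensures $F(W_j) \cap F_{\bigcup_{l=1}^{i-1} W_l}(W) = \emptyset$. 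Summing cardinalities then yields (ii).

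The main obstacle is purely clerical: the iterated index bookkeeping between unions, complement-mapping subscripts, and the shift from $i$ to $i + 1$ must be tracked carefully. There is no substantive difficulty beyond iterating Lemma \ref{L:21}(ii) and (iii) along the partition and invoking Lemma \ref{L:31} for disjointness.
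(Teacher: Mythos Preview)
Your proof is correct. For part (i) you proceed by induction on $i$, peeling off one block $W_i$ at a time via Lemma~\ref{L:21}(ii)--(iii), whereas the paper argues directly: given $y$ in the left-hand side, it picks the minimal index $i_0$ with $y \in F(W_{i_0})$ (or sets $i_0 = i$ if no such index exists) and observes that $y$ then lands in the $i_0$-th summand of the right-hand side. Your inductive argument is slightly more structural and reuses Lemma~\ref{L:21} cleanly; the paper's element-chasing is shorter and avoids the bookkeeping of restricting $F_A$ to $W \cup W_i$. For part (ii) the two arguments are essentially the same: both verify that the union in (i) is disjoint by invoking Lemma~\ref{L:31}, with only a cosmetic difference in which clause (you use (i) and (iii), the paper cites (ii) and (iii)).
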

\begin{proof}
``(i)" Let $y \in F(W \cup \bigcup_{j=1}^{i-1}W_j) $. 
We distinguish two cases.  \\
Case 1: $y \in F(W_l)$ for some $1 \le l \le i-1$.  \\
Choose a minimal $1 \le i_0 \le i-1$ such that $y \in F(W_{i_0})$. This implies 
$y \notin F(\bigcup_{l=1}^{i_0 - 1}W_l)$ and there exists $x \in W_{i_0}$ such that $y \in F(x)$.  \\
Case 2: $y \notin F(W_l)$ for $1 \le l \le i-1$.  \\
Set $i_0=i$, i.e., $y \notin F(\bigcup_{l=1}^{i_0 - 1}W_l)$ and there exists $x \in W$ such that 
$y \in F(x)$.

In both cases we use the definition of the complement mapping
\[
F_{\bigcup_{l=1}^{i_0 - 1}W_l}(x) = F(x) \backslash F(\cup_{l=1}^{i_0 - 1}W_l),
\]
i.e., $y \in F_{\bigcup_{l=1}^{i_0 - 1}W_l}(x)$. In Case 1, 
$y \in \bigcup_{j=1}^{i-1} F_{\bigcup_{l=1}^{j-1} W_l}(W_j)$. In Case 2,
$y \in F_{\bigcup_{l=1}^{i-1} W_l}(W)$.
The ``$\supset$"-inclusion is obvious and this shows ``(i)".  \\  
``(ii)" Follows from ``(i)" and Lemma \ref{L:31} ``(ii)" and ``(iii)".
\end{proof} 

\begin{lemma} \label{L:34}
Let $W_1, \ldots, W_m$, $m \in \mathbb{N}$, be a partition of $X$. 
\begin{enumerate}[(i)]
\item $F(X) = \bigcup_{i=1}^{m}F_{\bigcup_{j=1}^{i-1} W_j}(W_i)$,  
\item $\sharp F(X) = \sum_{i=1}^{m} \sharp F_{\bigcup_{j=1}^{i-1} W_j}(W_i)$ and
\end{enumerate}
\end{lemma}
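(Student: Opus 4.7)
The plan is to deduce both parts of Lemma \ref{L:34} as the ``top level'' instance of Lemma \ref{L:32}, combined with the disjointness supplied by Lemma \ref{L:31}. Since $W_1, \ldots, W_m$ is a partition of $X$, we have $X \backslash \bigcup_{j=1}^{m-1} W_j = W_m$, so taking $i = m$ and $W = W_m$ is an admissible substitution in Lemma \ref{L:32}. Under this choice, $W \cup \bigcup_{j=1}^{m-1} W_j = X$, and the right-hand side of Lemma \ref{L:32}(i) becomes
\[
F_{\bigcup_{l=1}^{m-1} W_l}(W_m) \cup \bigcup_{j=1}^{m-1} F_{\bigcup_{l=1}^{j-1} W_l}(W_j)
= \bigcup_{i=1}^{m} F_{\bigcup_{j=1}^{i-1} W_j}(W_i),
\]
which is exactly the identity asserted in (i).

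For (ii), the same substitution in Lemma \ref{L:32}(ii) yields
\[
\sharp F(X) = \sharp F_{\bigcup_{l=1}^{m-1} W_l}(W_m) + \sum_{j=1}^{m-1} \sharp F_{\bigcup_{l=1}^{j-1} W_l}(W_j)
= \sum_{i=1}^{m} \sharp F_{\bigcup_{j=1}^{i-1} W_j}(W_i),
\]
giving (ii). Alternatively, once (i) is established, one can obtain (ii) independently by observing that the sets $F_{\bigcup_{j=1}^{i-1} W_j}(W_i)$, $i = 1, \ldots, m$, are pairwise disjoint by Lemma \ref{L:31}(iii), so that cardinality is additive over the union.

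There is no real obstacle here; the statement is essentially the case in which the subset $W$ of Lemma \ref{L:32} exhausts the last block of the partition, and both equalities reduce to rewriting the terminal and general summands in a uniform way. The only point that requires a moment of care is the boundary bookkeeping: verifying that $X \backslash \bigcup_{j=1}^{m-1} W_j = W_m$ (this uses that $W_1, \ldots, W_m$ is a partition, i.e., both a cover and pairwise disjoint) and checking that the $j = m$ term on the left-hand side of the substituted formula correctly coincides with the ``$W$-term'' $F_{\bigcup_{l=1}^{m-1} W_l}(W)$ of Lemma \ref{L:32}.
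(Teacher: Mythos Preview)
Your proof is correct and follows exactly the paper's approach: both parts are obtained from Lemma~\ref{L:32} by substituting $i=m$ and $W=W_m$, using that $W_1,\ldots,W_m$ is a partition of $X$. Your added remark that (ii) can alternatively be derived from (i) together with the pairwise disjointness of Lemma~\ref{L:31}(iii) is also valid, though the paper does not mention it.
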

\begin{proof}
``(i)" Follows from Lemma \ref{L:32} ``(i)" with $i=m$ and $W=W_m$.   \\
``(ii)" Follows from Lemma \ref{L:32} ``(ii)" with $i=m$ and $W=W_m$. 
\end{proof} 

A Hall partition is a partition of $X$ with additional properties connected with the complement mapping.
As we will see later, a set-valued mapping admits a Hall partition if and only if it satisfies the Hall condition.

\begin{definition} \label{D:partition}
A tuple $(W_1, \ldots, W_m)$, $m \in \mathbb{N}$, is called a Hall partition of $F$ if 
$W_1, \ldots, W_m$ is a partition of $X$,
\begin{enumerate}[(i)] 
\item $F_{\bigcup_{j=1}^{i-1} W_j}$ has nonempty images on $W_i$ for $i=1, \ldots, m$,
\item $W_i$ is a non-reducible set of $F_{\bigcup_{j=1}^{i-1} W_j}$ for $i = 1, \ldots, m$ and 
\item $W_i$ is a critical set of $F_{\bigcup_{j=1}^{i-1} W_j}$ for $i = 1, \ldots, m-1$.
\end{enumerate}
\end{definition}

At first glance a Hall partition has nothing to do with the Hall condition, but Lemma \ref{L:26} and 
Theorems \ref{T:31} and \ref{T:32} justify the name. 

\begin{example} \label{E:31}
On the left in Fig. \ref{Fig2}, $W_1=\{1, 2, 3\}$ is a non-reducible set of $F$ and describes a Hall partition. 
In the middle the set $W_1 = \{1, 2\}$ describes a critical and non-reducible set of $F$ and 
$W_2 = \{3, 4\}$ describes a critical and non-reducible set of $F_{W_1}$. $(W_1,W_2)$ describes a 
Hall partition of $F$. On the right $W_1 = \{1, 2\}$ describes a critical and non-reducible set of $F$, 
$W_2 = \{3\}$ describes a critical and non-reducible set of $F_{W_1}$ and $W_3 = \{4\}$ describes 
a non-reducible set of $F_{W_1 \cup W_2}$ where $F_{W_1 \cup W_2}(W_3) = \{3,5\}$, i.e., $W_3$
is not a critical set of $F_{W_1 \cup W_2}$. $(W_1,W_2,W_3)$ describes a Hall partition of $F$.
\end{example}

\begin{figure}
\includegraphics{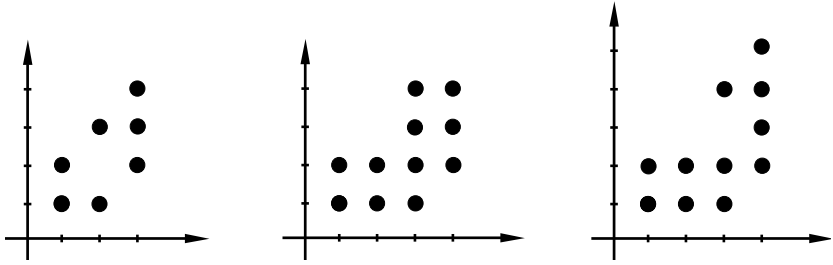}
\caption{Examples of Hall partitions} 
\label{Fig2}
\end{figure}

In contrast to partitions $W_1, \ldots, W_m$ of $X$, the ordering of a Hall partition $(W_1, \ldots, W_m)$
of $F$ is not arbitrary (compare Fig. \ref{Fig4} on the right). The examples in Fig. \ref{Fig2} suggest, 
that the ordering of a Hall partition is unique. But it is possible to construct examples where the 
ordering is not unique (compare Fig. \ref{Fig4} in the middle). It is also possible to construct examples
where any ordering of $(W_1, \ldots, W_m)$ describes a Hall partition of $F$ (compare Fig. \ref{Fig4} 
on the left).

We state additional properties of Hall partitions.

\begin{lemma} \label{L:35}
Let $(W_1, \ldots, W_m)$, $m \in \mathbb{N}$, be a Hall partition of $F$. 
\begin{enumerate}[(i)]
\item $\bigcup_{j=1}^{i-1}W_j$ 
is a critical set of $F$ for $i=2, \ldots, m$. and
\item $W_m$ is a critical set of $F_{\bigcup_{j=1}^{m-1} W_j}$ if and only if $\sharp F(X) = \sharp X$.
\end{enumerate}
\end{lemma}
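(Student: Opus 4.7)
The approach for both parts is to reduce everything to a single cardinality identity from the preceding Section~\ref{S:partition} lemmas and then feed in the critical-set hypothesis from Definition~\ref{D:partition}(iii). Neither statement should require new machinery; they are really accounting identities about partial sums along the partition.

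For (i), the plan is to apply Lemma~\ref{L:32}(ii) with the empty subset $W = \emptyset$ of $X \setminus \bigcup_{j=1}^{i-1} W_j$. Using the conventions fixed in the introduction (empty union is empty, empty sum is zero), this specialization collapses to
\[
\sharp F\bigl(\textstyle\bigcup_{j=1}^{i-1} W_j\bigr) = \sum_{j=1}^{i-1} \sharp F_{\bigcup_{l=1}^{j-1} W_l}(W_j).
\]
Since $i \le m$, every index $j \in \{1,\ldots,i-1\}$ lies in $\{1,\ldots,m-1\}$, so Definition~\ref{D:partition}(iii) applies and each summand equals $\sharp W_j$. Summing and using the disjointness of the $W_j$ gives $\sharp F(\bigcup_{j=1}^{i-1} W_j) = \sharp \bigcup_{j=1}^{i-1} W_j$, and nonemptiness is immediate from $W_1 \ne \emptyset$, so the union is critical in $F$.

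For (ii), I would begin from Lemma~\ref{L:34}(ii), namely $\sharp F(X) = \sum_{i=1}^{m} \sharp F_{\bigcup_{j=1}^{i-1} W_j}(W_i)$. By Definition~\ref{D:partition}(iii), each of the first $m-1$ summands equals $\sharp W_i$; subtracting $\sharp X = \sum_{i=1}^{m} \sharp W_i$ leaves
\[
\sharp F(X) - \sharp X = \sharp F_{\bigcup_{j=1}^{m-1} W_j}(W_m) - \sharp W_m.
\]
Because $W_m$ is nonempty and is exactly the domain of $F_{\bigcup_{j=1}^{m-1} W_j}$, the set $W_m$ being critical in that mapping is by Definition~\ref{D:critical} the identity $\sharp F_{\bigcup_{j=1}^{m-1} W_j}(W_m) = \sharp W_m$, which by the display above is equivalent to $\sharp F(X) = \sharp X$.

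There is no genuinely hard step here; what I expect to watch carefully is the bookkeeping of the index ranges, so that Definition~\ref{D:partition}(iii) is invoked only for $j \le m-1$ (its stated range), and the empty-union/empty-sum conventions are used explicitly to justify specializing Lemma~\ref{L:32}(ii) to $W = \emptyset$.
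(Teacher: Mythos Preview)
Your proof is correct and follows essentially the same approach as the paper: part~(i) via Lemma~\ref{L:32}(ii) with $W=\emptyset$ together with Definition~\ref{D:partition}(iii), and part~(ii) via Lemma~\ref{L:34}(ii) with the same substitution for the first $m-1$ summands. You have simply made explicit the bookkeeping (index ranges, nonemptiness, empty-sum conventions) that the paper leaves to the reader.
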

\begin{proof}
``(i)" This is a consequence of Lemma \ref{L:32} ``(ii)" with $W = \emptyset$ and the definition of Hall
partitions.   \\
``(ii)"  Using Lemma \ref{L:34} ``(ii)" and the definition of Hall partitions, the equation
\[
\sharp F(X) = \sum_{i=1}^{m} \sharp F_{\cup_{j=1}^{i-1} W_j} (W_i)
= \sum_{i=1}^{m-1} \sharp W_i + \sharp W_m + \sharp F_{\cup_{j=1}^{m-1} W_j} (W_m) - \sharp W_m
\]
\[
= \sharp X + \sharp F_{\cup_{j=1}^{m-1} W_j} (W_m) - \sharp W_m 
\]
proves the statement.
\end{proof} 

\begin{theorem} \label{T:31}
Let $F$ satisfy the Hall condition. $F$ admits a Hall partition.
\end{theorem}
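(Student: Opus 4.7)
The plan is to prove this by induction on $\sharp X$, using Lemmas \ref{L:23} and \ref{L:25} to peel off a critical non-reducible piece at each step, and Lemma \ref{L:21}(ii) to identify iterated complements with a single complement so that the induction hypothesis applies cleanly.

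For the base case $\sharp X = 1$, write $X = \{x\}$; the Hall condition forces $F(x) \neq \emptyset$, and the singleton $\{x\}$ is trivially non-reducible, so $(X)$ is a Hall partition with $m = 1$.

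For the inductive step I would split into two cases. If $X$ is itself a non-reducible set of $F$, then since the Hall condition gives $\sharp F(\{x\}) \geq 1$ for every $x \in X$ (so $F$ has nonempty images), the one-element tuple $(X)$ fulfils conditions (i) and (ii) of Definition \ref{D:partition}, while (iii) is vacuous. Otherwise, by definition of non-reducible there is a proper critical subset of $X$, so Lemma \ref{L:25} yields a critical and non-reducible set $W_1 \subsetneq X$ of $F$. I would then verify that $F_{W_1}$ is a legitimate object to recurse on: by Lemma \ref{L:23} it satisfies the Hall condition on $X \setminus W_1$, and it has nonempty images because for any $x \in X \setminus W_1$ the Hall condition gives $\sharp F(W_1 \cup \{x\}) \geq \sharp W_1 + 1 = \sharp F(W_1) + 1$ (using that $W_1$ is critical), hence $F(x) \setminus F(W_1) \neq \emptyset$.

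Since $\sharp(X \setminus W_1) < \sharp X$, the induction hypothesis furnishes a Hall partition $(W_2, \ldots, W_m)$ of $F_{W_1}$. I would then claim that $(W_1, W_2, \ldots, W_m)$ is the desired Hall partition of $F$. Condition (i)--(iii) at index $i = 1$ hold by the construction of $W_1$ together with the Hall condition. For $i \geq 2$, the crucial identification
\[
(F_{W_1})_{\bigcup_{j=2}^{i-1} W_j} \;=\; F_{W_1 \cup \bigcup_{j=2}^{i-1} W_j} \;=\; F_{\bigcup_{j=1}^{i-1} W_j}
\]
from Lemma \ref{L:21}(ii) translates each of the three defining properties of the Hall partition of $F_{W_1}$ directly into the corresponding property for $F$.

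The main obstacle is really just the bookkeeping: ensuring that $W_1$ exists as a \emph{proper} subset (which requires using the assumption that $X$ is not non-reducible, not merely Lemma \ref{L:25} applied to $X$ itself), and verifying that $F_{W_1}$ has nonempty images so that the inductive hypothesis can legitimately be invoked. Everything else is a mechanical translation via Lemma \ref{L:21}(ii).
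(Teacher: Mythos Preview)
Your proposal is correct and follows essentially the same inductive argument as the paper. The only cosmetic difference is the case split: the paper divides according to whether $F$ admits a critical set (and then subdivides on whether the resulting critical non-reducible $W_1$ equals $X$), whereas you split directly on whether $X$ itself is non-reducible---your first case absorbs the paper's Cases~1a and~2 at once, and your second case is exactly the paper's Case~1b, with the same appeal to Lemmas~\ref{L:23}, \ref{L:25}, and~\ref{L:21}(ii).
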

\begin{proof} 
We prove the statement by induction on the number of elements $\sharp X$ in $X$. Let $\sharp X = 1$. 
Choose $m = 1$ and $W_1 = X$. Then $F = F_\emptyset$ has nonempty images on $W_1$ and $W_1$ 
is a non-reducible set of $F = F_\emptyset$. This shows the statement for $\sharp X = 1$.

Let $\sharp X > 1$. By induction hypothesis the statement is true for sets $X^\prime$ with 
$\sharp X^\prime < \sharp X$. We distinguish several cases. \\
Case 1: $F$ admits a critical set. \\
By Lemma \ref{L:25} there exists a critical and non-reducible set $W_1 \subset X$ of $F$. We 
consider two subcases. \\
Case 1a: $W_1 = X$.  \\
$W_1$ is a critical and non-reducible set of $F$ and $(W_1)$ is a Hall partition of $F$.  \\
Case 1b: $W_1 \ne X$.  \\
$X \backslash W_1$ is nonempty and $\sharp (X \backslash W_1) < \sharp X$. By Lemma 
\ref{L:23}, $F_{W_1}$ satisfies the Hall condition. By induction hypothesis there exists a Hall 
partition $(W_2, \ldots, W_m)$, $m \in \mathbb{N}$, of $F_{W_1}$, i.e., $W_2, \ldots, W_m$
is a partition of $X \backslash W_1$,
\begin{enumerate}[(i)] 
\item $(F_{W_1})_{\bigcup_{j=2}^{i-1} W_j}$ has nonempty images on $W_i$ for $i=2, \ldots, m$,
\item $W_i$ is a non-reducible set of $(F_{W_1})_{\bigcup_{j=2}^{i-1} W_j}$ for $i = 2, \ldots, m$ and  
\item $W_i$ is a critical set of $(F_{W_1})_{\bigcup_{j=2}^{i-1} W_j}$ for $i = 2, \ldots, m-1$. 
\end{enumerate}
Using Lemma \ref{L:21} ``(ii)", $(F_{W_1})_{\bigcup_{j=2}^{i-1} W_j} = F_{\bigcup_{j=1}^{i-1} W_j}$
for $i = 2, \ldots, m$ and $(W_1, \ldots, W_m)$ describes a Hall partition of $F$. \\
Case 2: $F$ does not admit a critical set. \\
$W_1=X$ is a non-reducible set of $F$ and $(W_1)$ describes a Hall partition of $F$.
\end{proof} 

The converse of Theorem \ref{T:31} is also true.

\begin{theorem} \label{T:32}
Let $F$ admit a Hall partition. $F$ satisfies the Hall condition.
\end{theorem}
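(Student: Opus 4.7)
The plan is to induct on the length $m$ of the Hall partition $(W_1, \ldots, W_m)$. For the base case $m = 1$, the single part $W_1 = X$ is non-reducible for $F_\emptyset = F$ by condition~(ii) of Definition~\ref{D:partition} and $F$ has nonempty images by condition~(i), so Lemma~\ref{L:26}~(i) yields the Hall condition for $F$ at once.

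For the inductive step with $m \ge 2$, I would first verify that $(W_2, \ldots, W_m)$ is a Hall partition of $F_{W_1}$ on the smaller ground set $X \backslash W_1$. Using the identity $(F_{W_1})_{\bigcup_{j=2}^{i-1} W_j} = F_{\bigcup_{j=1}^{i-1} W_j}$ from Lemma~\ref{L:21}~(ii), each of the three conditions of Definition~\ref{D:partition} for $(W_1, \ldots, W_m)$ and $F$, restricted to indices $i \ge 2$, translates immediately into the corresponding condition for $(W_2, \ldots, W_m)$ and $F_{W_1}$. The induction hypothesis then gives that $F_{W_1}$ satisfies the Hall condition.

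Given an arbitrary $A \subset X$, I would decompose it as $A = V_1 \cup A'$ with $V_1 = A \cap W_1$ and $A' = A \backslash W_1 \subset X \backslash W_1$. By the definition of the complement mapping, $F_{W_1}(A') = F(A') \backslash F(W_1)$ is disjoint from $F(V_1) \subset F(W_1)$, and both sets sit inside $F(A)$, so
\[
\sharp F(A) \;\ge\; \sharp F(V_1) + \sharp F_{W_1}(A').
\]
Condition~(ii) of the Hall partition for $i = 1$ says $W_1$ is non-reducible for $F$, and hence also for the restriction $F_{\mid W_1}$ (a proper critical subset of $W_1$ is the same object under $F$ or under $F_{\mid W_1}$); combined with condition~(i), Lemma~\ref{L:26}~(i) applied to $F_{\mid W_1}$ gives $\sharp F(V_1) \ge \sharp V_1$, while the induction hypothesis gives $\sharp F_{W_1}(A') \ge \sharp A'$. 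Adding the two estimates produces $\sharp F(A) \ge \sharp V_1 + \sharp A' = \sharp A$, which is exactly the Hall condition.

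The main bookkeeping hurdle will be keeping the two scales straight: Lemma~\ref{L:26} is invoked on the restriction $F_{\mid W_1}$ over the ground set $W_1$, whereas the induction hypothesis lives on the complement mapping $F_{W_1}$ over $X \backslash W_1$. The disjointness of $F(V_1)$ and $F_{W_1}(A')$, which the complement mapping produces automatically, is precisely what licences adding the two estimates.
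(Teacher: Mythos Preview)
Your proof is correct. Both the paper and you rely on Lemma~\ref{L:26} as the engine, but the packaging differs. The paper argues directly in one shot: for an arbitrary $W\subset X$ it writes $F(W)=\bigcup_{i=1}^m F(W\cap W_i)$, lower-bounds each piece by $F_{\bigcup_{j=1}^{i-1}W_j}(W\cap W_i)$, invokes the pairwise disjointness from Lemma~\ref{L:31}~(iii) to turn the union into a sum of cardinalities, and then applies Lemma~\ref{L:26}~(i) to each restricted complement mapping simultaneously. You instead peel off $W_1$ and induct on $m$, first checking that $(W_2,\ldots,W_m)$ is a Hall partition of $F_{W_1}$ via Lemma~\ref{L:21}~(ii) and then combining the Lemma~\ref{L:26} estimate on $W_1$ with the induction hypothesis on $F_{W_1}$. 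Your route avoids Lemma~\ref{L:31} entirely, needing only the elementary disjointness of $F(V_1)\subset F(W_1)$ and $F_{W_1}(A')$, at the price of the extra verification that the tail of the partition is again a Hall partition; the paper's route is shorter but leans on the preparatory Lemma~\ref{L:31}~(iii). Substantively the two arguments unfold to the same computation.
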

\begin{proof}
Let $(W_1, \ldots, W_m)$, $m \in \mathbb{N}$, be a Hall partition of $F$ and let $W \subset X$. 
Using Lemma \ref{L:31} ``(iii)" and \ref{L:26} applied to $F_{\bigcup_{j=1}^{i-1} W_j}$ for
$i=1, \ldots, m$,
\[
\sharp F(W) 
= \sharp \bigcup_{i=1}^{m}  F(W \cap W_i) 
\ge \sharp \bigcup_{i=1}^{m}  F_{\bigcup_{j=1}^{i-1} W_j}(W \cap W_i) 
\]
\[
= \sum_{i=1}^{m} \sharp F_{\bigcup_{j=1}^{i-1} W_j}(W \cap W_i) 
\ge \sum_{i=1}^{m} \sharp (W \cap W_i) = \sharp W,
\]
i.e., $F$ satisfies the Hall condition.
\end{proof}

                                        %
                                        %
\section{The Alldifferent Kernel} \label{S:kernel}
We define the alldifferent kernel $F^*$ of $F$ which describes the submapping with the alldifferent 
selections of $F$. Provided there exists a Hall partition we define a second submapping $F^p$. The 
relation between both submappings is exhibited. 

We define a submapping $F^*: X \longrightarrow 2^Y$ of $F$ by
\begin{align*}                                                   
F^*(x) = \{ y \in F(x) \mid 
& \mbox{ there exists an alldifferent selection } s \mbox{ of } F \\
& \mbox{ such that } y = s(x) \} 
\end{align*}
for each $x \in X$. We call $F^*$ the alldifferent kernel of $F$. 

A simple consideration shows $(F^*)^* = F^*$. A set-valued mapping $F$ admits an alldifferent 
selection if and only if $F^*(x) \ne \emptyset$ for all (or one) $x \in X$. The mapping $F^*$ 
describes the submapping of all alldifferent selections of $F$.

\begin{example} \label{E:41}
On the left in Fig. \ref{Fig1}, $F^* = F$. On the right there is no alldifferent selection of 
$F$ which passes through the point $x=2$, $y=3$, i.e., $F^* \ne F$.
\end{example}

We introduce another submapping of $F$ by means of a Hall partition. Let $(W_1, \ldots, W_m)$,
$m \in \mathbb{N}$, be a Hall partition of $F$. We define a submapping $F^p: X \longrightarrow 2^Y$ 
of $F$ by
\[
F^p_{\mid W_i} = ({F_{\bigcup_{j=1}^{i-1}W_j}})_{\mid W_i} \mbox{ for each }i=1, \ldots , m.
\]
In particular $F^p$ has nonempty images. Formally the definition of $F^p$ depends not only on $F$, 
but also on the Hall partition. We will see in Theorem \ref{T:41} that the definition of $F^p$ does not 
depend on the choice of the Hall partition.

\begin{lemma} \label{L:44}
Let $F$ admit a Hall partition. $F^p$ is a submapping of $F^*$.
\end{lemma}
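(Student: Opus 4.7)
The plan is to fix an arbitrary $x_0 \in X$ and $y_0 \in F^p(x_0)$, and construct an alldifferent selection $s$ of $F$ with $s(x_0) = y_0$. Let $i$ be the unique block index with $x_0 \in W_i$; by definition of $F^p$, we have $y_0 \in F_{\bigcup_{j=1}^{i-1} W_j}(x_0)$.

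The main idea is to build $s$ block by block. For each $j \in \{1,\ldots,m\}$, consider the restricted mapping $G_j := (F_{\bigcup_{l=1}^{j-1} W_l})_{\mid W_j}$. By Definition \ref{D:partition}, $G_j$ has nonempty images and $W_j$ is non-reducible for $G_j$ (since non-reducibility only depends on $G_j$'s values on subsets of $W_j$). Hence Lemma \ref{L:26} ``(i)'' applies: $G_j$ satisfies the Hall condition, so by the marriage theorem there exists an alldifferent selection $s_j$ of $G_j$. For the block $W_i$ containing $x_0$, I additionally invoke Lemma \ref{L:26} ``(ii)'', which gives that $(G_i)_{\{x_0\},\{y_0\}}$ satisfies the Hall condition. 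One more appeal to the marriage theorem on $W_i \setminus \{x_0\}$, followed by setting $s_i(x_0) := y_0$, yields an alldifferent selection $s_i$ of $G_i$ with $s_i(x_0) = y_0$. Finally, I glue the pieces by defining $s(x) := s_j(x)$ whenever $x \in W_j$; since $G_j$ is a submapping of $F_{\mid W_j}$, $s$ is a selection of $F$, and by construction $s(x_0) = y_0$.

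It remains to verify that $s$ is alldifferent. Take distinct $x \in W_j$ and $x' \in W_k$. If $j = k$, the conclusion follows from the alldifferentness of $s_j$. If $j < k$, then $s(x) = s_j(x) \in F(W_j) \subset F(\bigcup_{l=1}^{k-1} W_l)$, whereas $s(x') = s_k(x') \in F_{\bigcup_{l=1}^{k-1} W_l}(x') = F(x') \setminus F(\bigcup_{l=1}^{k-1} W_l)$; these two sets are disjoint, so $s(x) \ne s(x')$. This separation is essentially the content of Lemma \ref{L:31} ``(iii)'' applied across blocks.

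The only genuinely delicate step is pinning $s_i(x_0) = y_0$ while still completing $s_i$ to an alldifferent selection on $W_i$; this is precisely what Lemma \ref{L:26} ``(ii)'' is engineered to provide. Everything else is routine: Lemma \ref{L:26} ``(i)'' supplies existence block by block, and the complement-mapping construction guarantees that images chosen in different blocks lie in disjoint subsets of $Y$, so no cross-block conflicts can arise.
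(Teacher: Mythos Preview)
Your proof is correct, but it takes a different route from the paper. The paper argues by induction on $\sharp X$: in the inductive step, each block $W_i$ carries the one-block Hall partition $(W_i)$, and the induction hypothesis applied to the restricted mapping $(F_{\bigcup_{j<i}W_j})_{\mid W_i}$ yields the alldifferent selection $s_i$ passing through the prescribed point; these are then glued via Lemma~\ref{L:31}~``(iii)''. You instead appeal to the classical marriage theorem on each block, fed by Lemma~\ref{L:26}. Your argument is more direct and arguably cleaner, but it imports P.~Hall's theorem as a black box. The paper's inductive proof is deliberately arranged to avoid this: Lemma~\ref{L:44} is afterwards used (through Theorem~\ref{T:41}) in the paper's own derivation of the marriage theorem (Theorem~\ref{T:42}), so with your proof in place that derivation would no longer be self-contained. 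If self-containment is not a concern, your approach is preferable; if it is, the induction is the right tool.
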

\begin{proof}
We prove the statement by induction on the number of elements $\sharp X$ of $X$.
The statement is true for $\sharp X = 1$. Let $\sharp X > 1$. Let $(W_1, \ldots, W_m)$,
$m \in \mathbb{N}$, be a Hall partition of $F$ and let $x \in X$, $y \in F^p (x)$. There exists 
$1 \le i_0 \le m$ such that $x \in W_{i_0}$. Define $x_{i_0} = x$ and $y_{i_0} = y$. Choose 
$x_i \in W_i$ and $y_i \in F^p(x_i)$ for $i=1, \ldots, m$, $i \ne i_0$. 
$(F_{\bigcup_{j=1}^{i-1}W_j})_{\mid W_i}$ admits a Hall partition $(W_i)$ and
$y_i \in F^p(x_i)=F_{\bigcup_{j=1}^{i-1}W_j}(x_i)$ for $i=1, \ldots, m$. By induction hypothesis 
there exist alldifferent selections $s_i$ of $(F_{\bigcup_{j=1}^{i-1}W_j})_{\mid W_i}$ such that
$s_i(x_i)=y_i$ for $i=1, \ldots, m$. Using Lemma \ref{L:31} ``(iii)", the $s_i$, $i=1, \ldots, m$, can 
be combined to an alldifferent selection $s$ of $F$ on $X$ such that $s(x_i)=y_i$ for $i=1, \ldots, m$. 
In particular, $y \in F^*(x)$.
\end{proof}

Lemma \ref{L:44} already shows that the existence of a Hall partition implies the existence of an 
alldifferent selection.

In their proof of the marriage theorem Halmos an Vaughan \cite{HV} distinguished two cases. In their 
Case 1 they assumed X to be a non-reducible set and in their Case 2 they assumed X not to be non-reducible. 
This case distinction can also been found in the present paper. Their Case 1 is contained in Lemma 
\ref{L:26}. Their Case 2 is contained in Case 1b of Theorem \ref{T:31} and Lemma \ref{L:44}. 

\begin{lemma} \label{L:45}
Let $W \subset X$ be a critical set of $F$ and let $s$ be an alldifferent selection of $F$. 
Then $s(x) \in F_W(x)$ for each $x \in X \backslash W$.
\end{lemma}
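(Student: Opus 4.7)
The plan is to use a pigeonhole/cardinality argument: the alldifferent selection $s$ must use up exactly the set $F(W)$ on the critical set $W$, leaving nothing in $F(W)$ available for elements outside $W$.

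First I would unpack what needs to be shown. By definition of the complement mapping we have $F_W(x) = F(x)\setminus F(W)$ for $x\in X\setminus W$, and because $s$ is a selection we already know $s(x)\in F(x)$. So the task reduces to verifying that $s(x)\notin F(W)$ whenever $x\in X\setminus W$.

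Next I would examine the image $s(W)$. Since $s$ is alldifferent, its restriction to $W$ is injective, so $\sharp s(W)=\sharp W$. Since $s$ is a selection, $s(w)\in F(w)\subset F(W)$ for each $w\in W$, hence $s(W)\subset F(W)$. Because $W$ is a critical set of $F$, Definition \ref{D:critical} gives $\sharp F(W)=\sharp W=\sharp s(W)$, and equality of cardinalities between a finite set and one of its subsets forces $s(W)=F(W)$.

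Finally I would conclude the statement. For any $x\in X\setminus W$ and any $w\in W$ we have $x\neq w$, so the alldifferent property yields $s(x)\neq s(w)$; thus $s(x)\notin s(W)=F(W)$. Combined with $s(x)\in F(x)$ this gives $s(x)\in F(x)\setminus F(W)=F_W(x)$, as required. There is no real obstacle here: the only subtlety is noticing that criticality is exactly what forces the inclusion $s(W)\subset F(W)$ to be an equality, which is the step that does all the work.
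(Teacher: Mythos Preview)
Your proof is correct and follows essentially the same approach as the paper: both argue that $s(W)\subset F(W)$ together with $\sharp s(W)=\sharp W=\sharp F(W)$ forces $s(W)=F(W)$, and then use injectivity of $s$ to conclude $s(x)\notin F(W)$ for $x\in X\setminus W$. Your write-up is slightly more explicit about unpacking the definition of $F_W$, but the argument is identical.
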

\begin{proof}
Let $x \in X \backslash W$. Since $s$ is a selection of $F$, $s(W) \subset F(W)$. Since $W$ is critical 
and $s$ is alldifferent $\sharp W = \sharp s(W) \le \sharp F(W) = \sharp W$. This implies $s(W) = F(W)$ 
and $s(x) \notin F(W)$, since $s$ is alldifferent. This shows $s(x) \in F(x) \backslash F(W) = F_W(x)$.
\end{proof}

\begin{lemma} \label{L:46}
Let $F$ admit a Hall partition $(W_1, \ldots, W_m)$, $m \in \mathbb{N}$, and let $s$ be an alldifferent 
selection of $F$. $s_{\mid \bigcup_{j=i}^{m}W_j}$ is an alldifferent selection of 
$F_{\bigcup_{j=1}^{i-1}W_j}$ for $i = 1, \ldots, m$.
\end{lemma}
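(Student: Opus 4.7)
The plan is to reduce the statement, for each $i$, to a direct application of Lemma~\ref{L:45}, using Lemma~\ref{L:35}~(i) to supply the needed critical set.

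First I would dispose of the trivial case $i=1$. Here $\bigcup_{j=1}^{0} W_j = \emptyset$, so $F_{\bigcup_{j=1}^{0} W_j} = F_{\emptyset,\emptyset} = F$ and $\bigcup_{j=1}^{m} W_j = X$, so the asserted restriction is $s$ itself, which is by hypothesis an alldifferent selection of $F$.

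For $2 \le i \le m$, set $W = \bigcup_{j=1}^{i-1} W_j$. By Lemma~\ref{L:35}~(i), $W$ is a critical set of $F$. Since $s$ is an alldifferent selection of $F$, Lemma~\ref{L:45} gives $s(x) \in F_W(x)$ for every $x \in X \setminus W = \bigcup_{j=i}^{m} W_j$. Thus $s_{\mid \bigcup_{j=i}^{m} W_j}$ is a selection of $F_{\bigcup_{j=1}^{i-1} W_j}$ defined on the correct domain $X \setminus W$, and it remains alldifferent as a restriction of an alldifferent mapping.

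There is essentially no obstacle here: the work has already been done in Lemmas~\ref{L:35} and~\ref{L:45}, and the present lemma is a packaging of those two facts. The only point to be careful about is that Lemma~\ref{L:45} requires the critical set to be a critical set of $F$ itself, which is exactly what Lemma~\ref{L:35}~(i) supplies for the initial segments of a Hall partition.
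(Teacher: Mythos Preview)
Your proof is correct. Both your argument and the paper's rely on Lemma~\ref{L:45}, but the packaging differs. The paper proceeds by induction on $i$: assuming $s_{\mid \bigcup_{j=i-1}^{m}W_j}$ is an alldifferent selection of $F_{\bigcup_{j=1}^{i-2}W_j}$, it applies Lemma~\ref{L:45} with the single set $W_{i-1}$ (critical by Definition~\ref{D:partition}~(iii)) inside the ambient mapping $F_{\bigcup_{j=1}^{i-2}W_j}$, peeling off one $W_j$ at a time. You instead invoke Lemma~\ref{L:35}~(i) to obtain directly that the full union $\bigcup_{j=1}^{i-1}W_j$ is critical for $F$, and then apply Lemma~\ref{L:45} once in the original mapping. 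Your route is shorter and avoids the bookkeeping of changing the ambient set at each step; the paper's route is more self-contained in that it uses only the defining property~(iii) of a Hall partition rather than the derived Lemma~\ref{L:35}~(i). Either way the content is the same.
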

\begin{proof}
We prove the statement by induction on $i=1, \ldots, m$. The statement is true for $i=1$. 
Let $2 \le i \le m$. By induction hypothesis $s_{\mid \bigcup_{j=i-1}^{m}W_j}$ is an alldifferent 
selection of $F_{\bigcup_{j=1}^{i-2}W_j}$. We apply Lemma \ref{L:45} where 
$\bigcup_{j=i-1}^{m}W_j \rightarrow X$, $W_{i-1} \rightarrow W$, 
$s_{\mid \bigcup_{j=i-1}^{m}W_j} \rightarrow s$ and $F_{\bigcup_{j=1}^{i-2}W_j}\rightarrow F$
and obtain that
$(s_{\mid \bigcup_{j=i-1}^{m}W_j})_{\mid \bigcup_{j=i}^{m}W_j} = s_{\mid \bigcup_{j=i}^{m}W_j}$ 
is an alldifferent selection of $(F_{\bigcup_{j=1}^{i-2}W_j})_{W_{i-1}} = F_{\bigcup_{j=1}^{i-1}W_j}$.
\end{proof}

The combination of Lemma \ref{L:44} and \ref{L:46} shows the identity of the submappings $F^*$ 
and $F^p$.

\begin{theorem} \label{T:41}
Let $F$ admit a Hall partition. $F^* = F^p$. 
\end{theorem}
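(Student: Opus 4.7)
The plan is to prove the theorem by combining the two lemmas that precede it: Lemma \ref{L:44} already gives the inclusion $F^p(x) \subset F^*(x)$ for every $x \in X$, so all that remains is the reverse inclusion $F^*(x) \subset F^p(x)$.

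To establish this reverse inclusion, I would fix a Hall partition $(W_1, \ldots, W_m)$ of $F$, pick any $x \in X$ and any $y \in F^*(x)$, and unpack the definition of $F^*$ to obtain an alldifferent selection $s$ of $F$ with $s(x) = y$. Let $i_0$ be the unique index with $x \in W_{i_0}$. By Lemma \ref{L:46} applied with $i = i_0$, the restriction $s_{\mid \bigcup_{j=i_0}^{m} W_j}$ is an alldifferent selection of $F_{\bigcup_{j=1}^{i_0 - 1} W_j}$. In particular
\[
y = s(x) \in F_{\bigcup_{j=1}^{i_0 - 1} W_j}(x).
\]
But by the definition of $F^p$ on $W_{i_0}$ this is exactly the statement $y \in F^p(x)$. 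Hence $F^*(x) \subset F^p(x)$, and together with Lemma \ref{L:44} we get $F^* = F^p$.

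I do not expect a serious obstacle here: the heavy lifting has already been done in Lemmas \ref{L:44}--\ref{L:46}, and the argument above is essentially just an assembly of those facts together with the defining formula $F^p_{\mid W_i} = (F_{\bigcup_{j=1}^{i-1}W_j})_{\mid W_i}$. The only point that deserves a brief remark in the write-up is that the equality $F^* = F^p$ retroactively confirms that the submapping $F^p$, which a priori depended on the choice of Hall partition, in fact does not depend on that choice — this is the comment promised in the paragraph preceding Lemma \ref{L:44}.
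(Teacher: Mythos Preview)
Your proposal is correct and follows essentially the same approach as the paper: both invoke Lemma~\ref{L:44} for the inclusion $F^p \subset F^*$, and for the reverse inclusion both pick $y = s(x) \in F^*(x)$, locate $x$ in some $W_{i_0}$, and apply Lemma~\ref{L:46} at index $i_0$ to conclude $s(x) \in F_{\bigcup_{j=1}^{i_0-1}W_j}(x) = F^p(x)$. Your closing remark about the independence of $F^p$ from the choice of Hall partition is exactly the observation the paper promised before Lemma~\ref{L:44}.
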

\begin{proof}
The implication $F^p(x) \subset F^*(x)$ for each $x \in X$ had been proved in Lemma \ref{L:44}.
Let $x \in X$ and $y \in F^*(x)$.
By definition of $F^*$ there exists an alldifferent selection $s$ of $F$ such that $s(x)=y$. Let 
$(W_1, \ldots, W_m)$, $m \in \mathbb{N}$, be a Hall partition of $F$. There exists $1\le i \le m$ such 
that $x \in W_i$. By Lemma \ref{L:46}, $s_{\mid W_i}$ is a selection of
$(F_{\bigcup_{j=1}^{i-1}W_j})_{\mid W_i}$ and by definition of $F^p$, $y=s(x) \in F^p(x)$.
\end{proof}

We combine our results to a new version of the marriage theorem \cite{HalP}.

\begin{theorem}[Marriage Theorem] \label{T:42}
The following statements are \\ 
equivalent: 
\begin{enumerate}[(i)] 
\item $F$ satisfies the Hall condition.  
\item $F$ admits a Hall partition.  
\item $F$ admits an alldifferent selection.
\end{enumerate}
\end{theorem}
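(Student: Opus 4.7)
The plan is to establish the three-way equivalence by a short cycle of implications, reusing the substantive work already done earlier in the paper. Specifically, I would prove (i) $\Rightarrow$ (ii) $\Rightarrow$ (iii) $\Rightarrow$ (i).

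The implication (i) $\Rightarrow$ (ii) is exactly Theorem \ref{T:31}, and (ii) $\Rightarrow$ (i) is exactly Theorem \ref{T:32}; either of these can be cited verbatim, but to close the cycle I only need one of them. For (ii) $\Rightarrow$ (iii), I would invoke Lemma \ref{L:44}: if $(W_1,\ldots,W_m)$ is a Hall partition of $F$, then $F^p$ is a submapping of $F^*$, and by construction $F^p$ has nonempty images (each $F_{\bigcup_{j=1}^{i-1}W_j}$ has nonempty images on $W_i$ by Definition \ref{D:partition}(i)). Hence $F^*(x)\neq\emptyset$ for every $x\in X$, which by the characterization of $F^*$ means that $F$ admits an alldifferent selection.

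The remaining implication (iii) $\Rightarrow$ (i) is the short, direct half of the classical marriage theorem and requires no preceding lemma. Given any alldifferent selection $s$ of $F$ and any $W\subset X$, we have $s(W)\subset F(W)$, and because $s$ is alldifferent $\sharp s(W) = \sharp W$. Therefore
\[
\sharp F(W) \ge \sharp s(W) = \sharp W,
\]
which is precisely the Hall condition.

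I do not expect any serious obstacle: all three implications are either quoted from previous results or (in the case of (iii) $\Rightarrow$ (i)) a two-line counting argument. The only minor subtlety is making sure to state explicitly that Definition \ref{D:partition}(i) guarantees nonempty images of $F^p$, so that Lemma \ref{L:44} really delivers an alldifferent selection and not merely the inclusion $F^p\subset F^*$.
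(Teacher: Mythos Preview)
Your proposal is correct and mirrors the paper's own proof almost exactly: the paper cites Theorem~\ref{T:31} for (i)~$\Rightarrow$~(ii), says (ii)~$\Rightarrow$~(iii) is contained in Theorem~\ref{T:41} (or Lemma~\ref{L:44}), and calls (iii)~$\Rightarrow$~(i) ``straightforward.'' Your only addition is spelling out explicitly why Lemma~\ref{L:44} yields a selection (via nonempty images of $F^p$) and writing the two-line counting argument for (iii)~$\Rightarrow$~(i), which is entirely appropriate.
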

\begin{proof}
The implication ``(i) $\Rightarrow$ (ii)" is Theorem \ref{T:31} and the implication
``(ii) $\Rightarrow$ (iii)" is contained in Theorem \ref{T:41} (or Lemma \ref{L:44}). The implication
``(iii) $\Rightarrow$ (i)" is straightforward.
\end{proof}

                                        %
                                        %
\section{Alldifferent Mappings} \label{S:alldifferent}
We introduce the term alldifferent for set-valued mappings and collect properties of alldifferent mappings.

\begin{definition} \label{D:alldifferent2}
A set-valued mapping $F$ is called alldifferent if $F$ has nonempty images and $F^* = F$.
\end{definition}

If $F$ admits an alldifferent selection, $F^*$ has nonempty images and  $F^*$ is alldifferent.
A similar condition had been considered by Mohr and Masini \cite{MM} who called it ``arc consistent". 
Hoeve \cite{Hoe} called it ``hyper-arc consistent". 

\begin{lemma} \label{L:51}
Let $F$ admit an alldifferent selection. The mapping $F^*$ is the largest alldifferent submapping of $F$.
\end{lemma}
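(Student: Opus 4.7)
The plan is to verify the two things packed into the statement: first that $F^*$ itself is an alldifferent submapping of $F$, and second that every alldifferent submapping of $F$ is contained in $F^*$. Both parts reduce to unpacking the definition of $F^*$ and the fact, recorded just above Example \ref{E:41}, that $(F^*)^* = F^*$.

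For the first part I would argue as follows. Since $F$ admits an alldifferent selection $s$, for every $x \in X$ we have $s(x) \in F^*(x)$ by the very definition of $F^*$, so $F^*$ has nonempty images. By hypothesis $(F^*)^* = F^*$, so $F^*$ is alldifferent in the sense of Definition \ref{D:alldifferent2}. That $F^*$ is a submapping of $F$ is immediate, since $F^*(x) \subset F(x)$ for each $x \in X$.

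For the second part, let $G$ be any alldifferent submapping of $F$. Fix $x \in X$ and $y \in G(x)$. Because $G$ is alldifferent, $G^* = G$, so $y \in G^*(x)$; hence there exists an alldifferent selection $\sigma$ of $G$ with $\sigma(x) = y$. Since $G$ is a submapping of $F$, the inclusion $\sigma(x') \in G(x') \subset F(x')$ for every $x' \in X$ shows that $\sigma$ is also a selection of $F$, and it is still alldifferent. By the definition of $F^*$ this gives $y \in F^*(x)$, so $G(x) \subset F^*(x)$ for each $x \in X$, i.e., $G$ is a submapping of $F^*$.

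I do not expect a real obstacle here; the only thing to be careful about is the logical order — one must know in advance that $(F^*)^* = F^*$ and that an alldifferent $G$ satisfies $G^* = G$, both of which are already recorded in the paper, and then the argument is essentially a one-line chase through the definition of $F^*$.
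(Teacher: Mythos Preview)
Your proof is correct and follows essentially the same route as the paper's: the paper already records just before the lemma that $F^*$ is alldifferent when $F$ admits an alldifferent selection, and its proof then consists precisely of your ``second part'' --- taking an alldifferent submapping $F'$, picking $y \in F'(x) = (F')^*(x)$, extracting the witnessing alldifferent selection, and observing it is also an alldifferent selection of $F$, whence $y \in F^*(x)$. The only difference is that you spell out the first part inside the proof rather than citing the preceding remark.
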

\begin{proof} 
By definition $F^*$ is a submapping of $F$. Let $F^\prime$ be an alldifferent submapping of $F$, i.e., 
$F^\prime$ has nonempty images, $F^\prime = (F^\prime)^*$ and $F^\prime (x) \subset F(x)$ 
for each $x \in X$. Let $x \in X$ and $y \in F^\prime (x)$. There exists an alldifferent selection $s$ 
of $F^\prime$ such that $s(x) = y$. Then $s$ is also a selection of $F$, i.e., $y \in F^* (x)$ and 
$F^\prime$ is a submapping of $F^*$.
\end{proof} 

\begin{lemma} \label{L:52}
The following statements are equivalent: 
\begin{enumerate}[(i)] 
\item $F$ is alldifferent. 
\item $F$ has nonempty images and $F_{\{x\},\{y\}}$ admits an alldifferent selection for each 
$x \in X$ and $y \in F(x)$. 
\end{enumerate}
\end{lemma}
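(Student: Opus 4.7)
The plan is to prove both directions by hand, using nothing more than the definition of $F^*$ and a straightforward restrict/extend construction. Since the complement mapping $F_{\{x\},\{y\}}$ is defined on $X\setminus\{x\}$ with images in $Y\setminus\{y\}$, alldifferent selections of $F$ through the point $(x,y)$ correspond bijectively to alldifferent selections of $F_{\{x\},\{y\}}$.

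For the implication (i) $\Rightarrow$ (ii), I would first note that $F$ alldifferent forces $F$ to have nonempty images by definition. Then, fixing $x\in X$ and $y\in F(x)$, the assumption $F^*=F$ gives an alldifferent selection $s$ of $F$ with $s(x)=y$. I would set $s'=s_{\mid X\setminus\{x\}}$ and check two things: that $s'(x')\in F(x')$ for every $x'\in X\setminus\{x\}$ (trivial since $s$ is a selection of $F$) and that $s'(x')\neq y$ (which follows because $s$ is alldifferent and $s(x)=y$). Hence $s'(x')\in F(x')\setminus\{y\}=F_{\{x\},\{y\}}(x')$, so $s'$ is an alldifferent selection of $F_{\{x\},\{y\}}$.

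For the reverse implication (ii) $\Rightarrow$ (i), I would show $F^*=F$. Fix $x\in X$ and $y\in F(x)$; by hypothesis $F_{\{x\},\{y\}}$ admits an alldifferent selection $s'$. I then extend it by setting
\[
s(x')=\begin{cases} s'(x'), & x'\in X\setminus\{x\},\\ y, & x'=x.\end{cases}
\]
Because $s'(x')\in F(x')\setminus\{y\}\subset F(x')$ and $y\in F(x)$, $s$ is a selection of $F$. It is alldifferent: two points in $X\setminus\{x\}$ are separated by $s'$, and no such point maps to $y$ since $s'$ takes values in $Y\setminus\{y\}$. Therefore $y\in F^*(x)$, and since $x,y$ were arbitrary, $F^*=F$.

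The construction is essentially bookkeeping, and the only minor subtlety is the degenerate case $X=\{x\}$, where $X\setminus\{x\}=\emptyset$; there the empty mapping serves as the vacuous alldifferent selection of $F_{\{x\},\{y\}}$ in both directions, so nothing extra is needed. I do not anticipate a real obstacle here; the lemma is a direct unpacking of the definitions of $F^*$, alldifferent, and the complement mapping $F_{\{x\},\{y\}}$.
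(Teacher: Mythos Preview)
Your proposal is correct and follows essentially the same restrict/extend argument as the paper's proof: in both directions one passes between an alldifferent selection $s$ of $F$ with $s(x)=y$ and the restriction $s_{\mid X\setminus\{x\}}$ viewed as an alldifferent selection of $F_{\{x\},\{y\}}$. Your write-up is slightly more detailed (explicitly checking $s'(x')\neq y$ and noting the degenerate case $X=\{x\}$), but the idea is identical.
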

\begin{proof} 
``(i) $\Rightarrow$ (ii)" Let $x \in X$ and $y \in F(x)$. Then $y \in F^*(x)$  and there exists an alldifferent 
selection $s$ of $F$ such that $y = s(x)$. The mapping $s_{\mid (X \backslash \{x\})}$ is an alldifferent 
selection of $F_{\{x\},\{y\}}$. \\
``(ii) $\Rightarrow$ (i)" By definition, $F^*$ is a submapping of $F$, i.e., $F^*(x) \subset F(x)$ for each 
$x \in X$. Let $x \in X$ and $y \in F(x)$. By assumption there exists an alldifferent selection $s^\prime$ of 
$F_{\{x\},\{y\}}$. Define
\[
s(z) =
\begin{cases}
y, & \mbox{ if }z = x, \\
s^\prime (z), & \mbox{ if }z \ne x
\end{cases}
\]
for each $z \in X$. The mapping $s$ is an alldifferent selection of $F$ and $s(x) = y$, i.e., $y \in F^*(x)$.
\end{proof} 

\begin{example} \label{E:51}
Define $X = Y = \{1, 2, 3, 4\}$ and consider the mapping whose graph is depicted in Fig. \ref{Fig3}.
The set $X$ is a critical and non-reducible set of $F$. The set-valued mapping $F_{\{4\},\{4\}}$ is 
alldifferent. The set-valued mapping $F_{\{1\},\{1\}}$ admits an alldifferent selection, but it is not 
alldifferent. The value $y=4$ is not contained in $(F_{\{1\},\{1\}})^*(3)$. 
\end{example}

\begin{figure}
\includegraphics{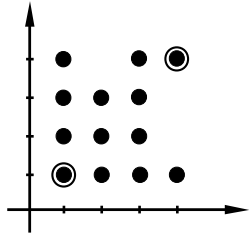}
\caption{An Alldifferent Mapping} 
\label{Fig3}
\end{figure}

\begin{lemma} \label{L:53}
Let $F$ be alldifferent. $F(W) \cap F(X \backslash W) = \emptyset$ for each critical 
set $W \subset X$ of $F$.
\end{lemma}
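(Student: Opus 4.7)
The plan is to argue by contradiction using Lemma \ref{L:45}. Suppose there exists $y \in F(W) \cap F(X \backslash W)$. Since $y \in F(X \backslash W)$, pick $x \in X \backslash W$ with $y \in F(x)$. Because $F$ is alldifferent, $F^*(x) = F(x)$, so $y \in F^*(x)$; by definition of $F^*$ this produces an alldifferent selection $s$ of $F$ with $s(x) = y$.

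Next I would apply Lemma \ref{L:45} to the critical set $W$ and the alldifferent selection $s$. That lemma gives $s(x) \in F_W(x) = F(x) \backslash F(W)$. In particular $s(x) \notin F(W)$. But $s(x) = y \in F(W)$ by our choice of $y$, a contradiction. Hence no such $y$ exists, i.e., $F(W) \cap F(X \backslash W) = \emptyset$.

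The argument is essentially a one-line application of Lemma \ref{L:45}; the only substantive ingredient is that alldifferentness of $F$ guarantees the existence of an alldifferent selection realizing the value $y$ at the chosen $x \in X \backslash W$. There is no real obstacle here — the lemma is set up precisely to make this deduction immediate once one recognizes that, for any element of $F(x)$ with $x \notin W$, alldifferentness upgrades pointwise membership to membership in an alldifferent selection, which Lemma \ref{L:45} then forbids from lying in $F(W)$.
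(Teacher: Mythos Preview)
Your proof is correct and essentially identical to the paper's: both pick $y \in F(X \backslash W)$, use alldifferentness to produce an alldifferent selection $s$ with $s(x)=y$ for some $x \in X\backslash W$, and then invoke Lemma~\ref{L:45} to conclude $y \notin F(W)$. The only cosmetic difference is that the paper phrases it as a direct argument (``let $y \in F(X\backslash W)$, then $y \notin F(W)$'') rather than by contradiction.
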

\begin{proof} 
Let $W \subset X$ be a critical set of $F$ and let $y \in F(X \backslash W)$. There exists 
$x \in X \backslash W$ such that $y \in F(x)$ and there exists an alldifferent selection $s$ of $F$ such that 
$s(x) = y$. Using Lemma \ref{L:45}, $y = s(x) \in F_W(x)=F(x) \backslash F(W)$, i.e., $y \notin F(W)$.
\end{proof} 

\begin{lemma} \label{L:54}
Let $(W_1, \ldots, W_m)$, $m \in \mathbb{N}$, be a Hall partition of $F$. Let 
$F(W) \cap F(X \backslash W) = \emptyset$ for each critical set $W \subset X$ of $F$. $F^p = F$
\end{lemma}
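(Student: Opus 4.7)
The plan is to show, for each $i = 1, \ldots, m$ and each $x \in W_i$, that $F^p(x) = F(x)$. Unwinding the definition, $F^p(x) = F_{\bigcup_{j=1}^{i-1}W_j}(x) = F(x) \setminus F(\bigcup_{j=1}^{i-1}W_j)$, so the goal reduces to verifying
\[
F(x) \cap F\Bigl(\bigcup_{j=1}^{i-1}W_j\Bigr) = \emptyset
\quad \text{for each } x \in W_i.
\]
For $i=1$ the intersection is with $F(\emptyset) = \emptyset$, so the equality is immediate.

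For $i \ge 2$, I would invoke Lemma \ref{L:35} ``(i)'', which says that $\bigcup_{j=1}^{i-1}W_j$ is a critical set of $F$. The hypothesis of the lemma then applies with $W = \bigcup_{j=1}^{i-1}W_j$, giving
\[
F\Bigl(\bigcup_{j=1}^{i-1}W_j\Bigr) \cap F\Bigl(X \setminus \bigcup_{j=1}^{i-1}W_j\Bigr) = \emptyset.
\]
Since $W_1, \ldots, W_m$ is a partition of $X$, we have $X \setminus \bigcup_{j=1}^{i-1}W_j = \bigcup_{j=i}^{m}W_j \supset W_i \ni x$, so $F(x) \subset F(X \setminus \bigcup_{j=1}^{i-1}W_j)$, and the desired disjointness follows.

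Combining the two cases yields $F^p(x) = F(x)$ for every $x \in X$, i.e., $F^p = F$. There is no real obstacle here: the lemma is essentially a packaging of Lemma \ref{L:35} ``(i)'' together with the hypothesis, and the only point that needs mild care is the book-keeping for $i=1$ (where the empty union convention of the introduction is used).
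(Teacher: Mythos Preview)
Your proof is correct and follows essentially the same approach as the paper: both invoke Lemma~\ref{L:35}~``(i)'' to see that $\bigcup_{j=1}^{i-1}W_j$ is critical for $i\ge 2$, apply the hypothesis with $W=\bigcup_{j=1}^{i-1}W_j$, and then use $W_i \subset X\setminus \bigcup_{j=1}^{i-1}W_j$ to obtain the required disjointness. The only cosmetic difference is that you argue pointwise for $x\in W_i$ while the paper phrases it at the level of $F(W_i)$.
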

\begin{proof}
Let $2 \le i \le m$. Using Lemma \ref{L:35} ``(i)", $\bigcup_{j=1}^{i-1}W_j$ is a critical set of $F$.
This implies 
\[
F(W_i) \cap F(\cup_{j=1}^{i-1}W_j)  
\subset F(X \backslash \cup_{j=1}^{i-1}W_j) \cap F(\cup_{j=1}^{i-1}W_j) 
= \emptyset.
\]
Using the definition of the complement mapping 
$F^p_{\mid W_i}=(F_{\bigcup_{j=1}^{i-1}W_j})_{\mid W_i}=F_{\mid W_i}$ for $i=1, \ldots, m$.
\end{proof}

For alldifferent mappings the description of Hall partitions can be simplified.

\begin{theorem} \label{T:51}
The following statements are equivalent: 
\begin{enumerate}[(i)] 
\item $F$ is alldifferent.
\item $F$ admits an alldifferent selection and $F(W) \cap F(X \backslash W) = \emptyset$ for each 
critical set $W \subset X$ of $F$.
\item There exists a Hall partition $(W_1, \ldots, W_m)$, $m \in \mathbb{N}$, of $F$ such that $F^p = F$.
\end{enumerate}
\end{theorem}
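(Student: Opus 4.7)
The plan is to prove the three implications $(i) \Rightarrow (ii) \Rightarrow (iii) \Rightarrow (i)$ by chaining together the results already established, since each step essentially invokes a single earlier lemma or theorem.

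For $(i) \Rightarrow (ii)$, I would argue as follows. If $F$ is alldifferent, then by Definition \ref{D:alldifferent2} the mapping $F^* = F$ has nonempty images, so in particular $F^*(x) \neq \emptyset$ for some (equivalently every) $x \in X$, which by the remark following the definition of $F^*$ means $F$ admits an alldifferent selection. The disjointness condition $F(W) \cap F(X \backslash W) = \emptyset$ for each critical set $W$ is then exactly the statement of Lemma \ref{L:53}.

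For $(ii) \Rightarrow (iii)$, since $F$ admits an alldifferent selection, the Marriage Theorem (Theorem \ref{T:42}) gives that $F$ satisfies the Hall condition, and hence $F$ admits a Hall partition $(W_1, \ldots, W_m)$. The disjointness hypothesis from $(ii)$ combined with this Hall partition is exactly the setup of Lemma \ref{L:54}, which yields $F^p = F$.

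For $(iii) \Rightarrow (i)$, suppose $(W_1, \ldots, W_m)$ is a Hall partition of $F$ with $F^p = F$. By property (i) in Definition \ref{D:partition}, $F^p$ has nonempty images, hence so does $F$. Theorem \ref{T:41} gives $F^* = F^p$, so $F^* = F$, and thus $F$ is alldifferent by Definition \ref{D:alldifferent2}.

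The proof is essentially bookkeeping: each implication is a direct consequence of one earlier result, and no step should present a genuine obstacle. The only thing to be careful about is citing the correct lemma for the disjointness clause in both directions -- Lemma \ref{L:53} provides the necessity of the disjointness condition under alldifferentness, while Lemma \ref{L:54} provides the sufficiency for obtaining $F^p = F$ from it.
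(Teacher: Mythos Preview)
Your proof is correct and follows essentially the same route as the paper's own proof, which cites Lemma~\ref{L:53} for $(i)\Rightarrow(ii)$, Theorem~\ref{T:31} together with Lemma~\ref{L:54} for $(ii)\Rightarrow(iii)$, and Theorem~\ref{T:41} for $(iii)\Rightarrow(i)$. The only cosmetic difference is that you invoke Theorem~\ref{T:42} rather than Theorem~\ref{T:31} to pass from the existence of an alldifferent selection to the existence of a Hall partition, and you spell out explicitly why $F$ has nonempty images in each direction; these are harmless elaborations of the same argument.
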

\begin{proof} 
The implication ``(i) $\Rightarrow$ (ii)" is Lemma \ref{L:53}. The implication ``(ii) $\Rightarrow$ (iii)" 
is Theorem \ref{T:31} and Lemma \ref{L:54}. The implication ``(iii) $\Rightarrow$ (i)" is 
Theorem \ref{T:41}.
\end{proof}

                                        %
                                        %
\section{Unicity of Hall Partitions} \label{S:unicity1}
We show the integer $m \in \mathbb{N}$ and the sets $W_1, \ldots, W_m$ of a Hall partition are 
(up to renumbering) uniquely determined.

\begin{lemma} \label{L:unicity1.1}
Let $F$ satisfy the Hall condition and let $V$ and $W$ be critical sets of $F$. The following statements hold:
\begin{enumerate}[(i)] 
\item $\sharp (V \cap W) = \sharp F(V \cap W) = \sharp (F(V) \cap F(W))$.  
\item $\sharp (V \cup W) = \sharp F(V \cup W) = \sharp (F(V) \cup F(W))$.  
\end{enumerate}
\end{lemma}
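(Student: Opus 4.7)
The plan is to combine the trivial inclusion–exclusion identities for finite sets with the Hall condition applied to both $V \cup W$ and $V \cap W$. The first ingredient is the set identity $F(V \cup W) = F(V) \cup F(W)$, which yields
\[
\sharp F(V \cup W) + \sharp (F(V) \cap F(W)) = \sharp F(V) + \sharp F(W).
\]
Since $V$ and $W$ are critical, the right side equals $\sharp V + \sharp W$, and ordinary inclusion–exclusion on $X$ rewrites this as $\sharp(V \cup W) + \sharp(V \cap W)$. So I already have the \emph{exact} identity
\[
\sharp F(V \cup W) + \sharp (F(V) \cap F(W)) = \sharp(V \cup W) + \sharp(V \cap W).
\]

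Next I would introduce the relevant inequalities pointing in the opposite direction. The trivial inclusion $F(V \cap W) \subset F(V) \cap F(W)$ gives $\sharp F(V \cap W) \le \sharp(F(V) \cap F(W))$. The Hall condition gives $\sharp F(V \cup W) \ge \sharp(V \cup W)$, and also $\sharp F(V \cap W) \ge \sharp(V \cap W)$ (this is the Hall inequality for $V \cap W$ when nonempty, and is trivial when $V \cap W = \emptyset$ by the convention that the union over an empty index set is empty). Adding the two Hall inequalities and inserting the trivial inclusion yields
\[
\sharp F(V \cup W) + \sharp(F(V) \cap F(W)) \;\ge\; \sharp F(V \cup W) + \sharp F(V \cap W) \;\ge\; \sharp(V \cup W) + \sharp(V \cap W).
\]
Comparing with the exact identity above forces equality throughout, and reading off the individual equalities extracts
\[
\sharp F(V \cup W) = \sharp(V \cup W), \qquad \sharp F(V \cap W) = \sharp(V \cap W) = \sharp(F(V) \cap F(W)).
\]
This yields statement (i) directly. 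For (ii) the remaining equality $\sharp(V \cup W) = \sharp(F(V) \cup F(W))$ comes for free from the set identity $F(V \cup W) = F(V) \cup F(W)$.

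The only real obstacle is bookkeeping: one must handle the degenerate case $V \cap W = \emptyset$ cleanly (so that the invocation of the Hall bound on $V \cap W$ is legitimate under the paper's conventions), and one must carefully read off every individual equality from the collapsed chain rather than just the sum. Both are routine given the preparatory conventions in the introduction, so no substantive structural difficulty arises.
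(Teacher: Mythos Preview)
Your proof is correct and follows essentially the same approach as the paper: combine inclusion--exclusion on both sides with the Hall inequalities for $V\cap W$ and $V\cup W$ and the trivial containment $F(V\cap W)\subset F(V)\cap F(W)$, then collapse the resulting chain to equalities. The only cosmetic difference is that you invoke the set identity $F(V\cup W)=F(V)\cup F(W)$ as an equality from the outset, whereas the paper writes it as an inequality that is later forced to be tight; your version is arguably cleaner, and your explicit handling of the $V\cap W=\emptyset$ case is a small improvement.
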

\begin{proof}
Using the Hall condition and elementary set theory we obtain
\[
\sharp (V \cap W) \le \sharp F(V \cap W) \le \sharp (F(V) \cap F(W)) \mbox{ and}  
\]
\[
\sharp (V \cup W) \le \sharp F(V \cup W) \le \sharp (F(V) \cup F(W)).
\]
This implies
\begin{align*}
\sharp V + \sharp W 
& =\sharp (V \cap W) + \sharp (V \cup W)  \\
& \le \sharp (F(V) \cap F(W)) + \sharp (F(V) \cup F(W))  \\
& =\sharp F(V) + \sharp F(W)  \\
& =\sharp V + \sharp W
\end{align*}
and 
\[
\sharp (F(V) \cap F(W)) = \sharp (V \cap W) + \sharp (V \cup W) - \sharp (F(V) \cup F(W))
\le \sharp (V \cap W)
\]
showing equality in case ``(i)". The case ``(ii)" follows analoguously. 
\end{proof}

In particular the preceeding lemma shows that the intersection and union of two critical sets
is a critical set again. This result had been shown already by Everett and Whaples \cite[Lemma 1]{EW}.

\begin{lemma} \label{L:unicity1.2}
Let $W \subset X$ be a critical set of $F$. Let $(W_1, \ldots, W_m)$, $m \in \mathbb{N}$, be a Hall 
partition of $F$ and let $1 \le i \le m$ such that $W \subset X \backslash \bigcup^{i-1}_{j=1}W_j$. 
$W$ is a critical set of $F_{\bigcup^{i-1}_{j=1}W_j}$.
\end{lemma}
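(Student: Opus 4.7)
The plan is to reduce the statement to a disjointness claim about images: if I can show $F(W) \cap F\bigl(\bigcup_{j=1}^{i-1} W_j\bigr) = \emptyset$, then $F_{\bigcup_{j=1}^{i-1} W_j}(W) = F(W)\setminus F\bigl(\bigcup_{j=1}^{i-1}W_j\bigr) = F(W)$, so the cardinality equation $\sharp F_{\bigcup_{j=1}^{i-1} W_j}(W) = \sharp F(W) = \sharp W$ is immediate from $W$ being critical for $F$. Since critical sets are by definition nonempty, this would finish the proof.

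First I would dispose of the trivial case $i = 1$: here $\bigcup_{j=1}^{0} W_j = \emptyset$ and $F_\emptyset = F$, so the conclusion is exactly the hypothesis. For $i \ge 2$, set $U := \bigcup_{j=1}^{i-1} W_j$. By Lemma \ref{L:35}(i), $U$ is a critical set of $F$. Since $F$ admits a Hall partition, Theorem \ref{T:32} tells me $F$ satisfies the Hall condition, so Lemma \ref{L:unicity1.1} is available to me for the pair $(U,W)$ of critical sets.

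Next I would apply Lemma \ref{L:unicity1.1}(i) with $V = U$. The assumption $W \subset X \setminus U$ gives $U \cap W = \emptyset$, so $\sharp (U \cap W) = 0$, and the chain of equalities in the lemma yields $\sharp(F(U) \cap F(W)) = 0$, i.e., $F(U)$ and $F(W)$ are disjoint. Combining this with the definition of the complement mapping delivers $F_U(W) = F(W)$, and then $\sharp F_U(W) = \sharp F(W) = \sharp W$, so $W$ is a critical set of $F_U = F_{\bigcup_{j=1}^{i-1}W_j}$ as required.

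The only subtlety I would flag is that the paper reserves the label ``critical set'' for nonempty sets, while the pair $(U,W)$ I am feeding into Lemma \ref{L:unicity1.1} satisfies $U \cap W = \emptyset$. This is not really an obstruction: inspecting the proof of Lemma \ref{L:unicity1.1}, the inequality $\sharp(V \cap W) \le \sharp F(V \cap W)$ is just the Hall condition applied to $V \cap W$, which is vacuously true for the empty set, and the rest of the chain still pins down $\sharp(F(V) \cap F(W)) = \sharp(V\cap W)$. So the lemma may be applied verbatim, and the argument goes through without modification.
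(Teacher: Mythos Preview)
Your proof is correct, but it takes a different route from the paper. The paper argues by a two-sided bound: $\sharp F_{\bigcup_{j=1}^{i-1}W_j}(W) \le \sharp F(W) = \sharp W$ is immediate, and for the reverse inequality it decomposes $W$ into the pieces $W \cap W_l$ for $l \ge i$, applies Lemma~\ref{L:26} to each $(F_{\bigcup_{j=1}^{l-1}W_j})_{\mid W_l}$, and uses the pairwise disjointness of Lemma~\ref{L:31}(iii) to sum. In particular the paper never claims $F(W)$ and $F\bigl(\bigcup_{j=1}^{i-1}W_j\bigr)$ are disjoint; it only shows the cardinalities match.

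Your argument is shorter and in fact proves a stronger statement: by invoking Lemma~\ref{L:35}(i) to make $U=\bigcup_{j=1}^{i-1}W_j$ critical and then applying Lemma~\ref{L:unicity1.1}(i) to the disjoint critical pair $(U,W)$, you obtain $F(U)\cap F(W)=\emptyset$ outright, whence $F_U(W)=F(W)$ as sets. The subtlety you flag about $U\cap W=\emptyset$ is harmless exactly as you say, since $\sharp F(\emptyset)=0=\sharp\emptyset$. Your approach buys a cleaner reduction and the bonus conclusion $F_U(W)=F(W)$; the paper's approach avoids the forward reference to the structure lemma on critical-set intersections and instead works directly with the block decomposition.
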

\begin{proof}
$\sharp F_{\bigcup^{i-1}_{j=1}W_j}(W) \le \sharp F(W) = \sharp W$, since $W$ is a critical set of $F$. 
By Lemma \ref{L:26}, $(F_{\bigcup^{l-1}_{j=1}W_j})_{\mid W_l}$ satisfies the Hall condition for 
$l=1, \ldots, m$. Using Lemma \ref{L:31} ``(iii)"
\[
\sharp F_{\bigcup^{i-1}_{j=1}W_j}(W)
= \sharp \bigcup^{m}_{l=i} F_{\bigcup^{i-1}_{j=1}W_j}(W \cap W_l)
\ge \sharp \bigcup^{m}_{l=i} F_{\bigcup^{l-1}_{j=1}W_j}(W \cap W_l)
\]
\[
= \sum^{m}_{l=i} \sharp F_{\bigcup^{l-1}_{j=1}W_j}(W \cap W_l)
\ge \sum^{m}_{l=i} \sharp (W \cap W_l)
= \sharp W.
\]
\end{proof}

\begin{lemma} \label{L:unicity1.3}
Let $(W_1, \ldots, W_m)$, $m \in \mathbb{N}$, be a Hall partition of $F$ and let $1 \le i \le m$ such 
that $W_i$ is a critical set of $F$. $F(W_i) \cap F(W_j) = \emptyset$ for $j=1, \ldots, i-1$.
\end{lemma}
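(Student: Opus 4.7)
The plan is to combine Lemma \ref{L:35}(i), which gives that $\bigcup_{j=1}^{i-1}W_j$ is a critical set of $F$, with Lemma \ref{L:unicity1.1}(ii), which controls how $F$ behaves on unions of critical sets. Throughout, I may assume the Hall condition holds for $F$ by Theorem \ref{T:32}, since $F$ admits a Hall partition.

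First I would dispose of the trivial case $i=1$: the range $j=1,\ldots,i-1$ is empty, so there is nothing to prove. For $i\ge 2$, set $U := \bigcup_{j=1}^{i-1} W_j$. By Lemma \ref{L:35}(i), $U$ is a critical set of $F$, and by hypothesis $W_i$ is also a critical set of $F$. Moreover $U$ and $W_i$ are disjoint because $(W_1,\ldots,W_m)$ is a partition, so $\sharp(U\cup W_i) = \sharp U + \sharp W_i = \sharp F(U) + \sharp F(W_i)$.

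Next I would apply Lemma \ref{L:unicity1.1}(ii) to the two critical sets $U$ and $W_i$ to obtain
\[
\sharp(F(U)\cup F(W_i)) = \sharp(U\cup W_i) = \sharp F(U) + \sharp F(W_i).
\]
By inclusion–exclusion, $\sharp(F(U)\cup F(W_i)) = \sharp F(U) + \sharp F(W_i) - \sharp(F(U)\cap F(W_i))$, so the displayed equality forces $\sharp(F(U)\cap F(W_i)) = 0$, that is, $F(U)\cap F(W_i) = \emptyset$. Since $F(U) = \bigcup_{j=1}^{i-1} F(W_j)$, this yields $F(W_j)\cap F(W_i) = \emptyset$ for each $j=1,\ldots,i-1$, as required.

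I do not expect any real obstacle here: the work has effectively been done in Lemmas \ref{L:35} and \ref{L:unicity1.1}. The only subtlety is remembering that $W_i$ being critical for $F$ itself (rather than for the complement $F_{U}$) is exactly what is needed to invoke Lemma \ref{L:unicity1.1} with the pair $(U,W_i)$; without that hypothesis one would only know $W_i$ is critical for $F_U$, which is weaker and would not directly give disjointness at the level of $F$.
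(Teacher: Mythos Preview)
Your proof is correct, but it takes a different route from the paper. The paper invokes Lemma~\ref{L:unicity1.2} to deduce that $W_i$ is also a critical set of the complement mapping $F_{\bigcup_{j=1}^{i-1}W_j}$, and then squeezes
\[
\sharp W_i = \sharp F_{\bigcup_{j=1}^{i-1}W_j}(W_i) = \sharp\bigl(F(W_i)\setminus F(\textstyle\bigcup_{j=1}^{i-1}W_j)\bigr) \le \sharp F(W_i) = \sharp W_i,
\]
forcing equality and hence $F(W_i)\cap F(\bigcup_{j=1}^{i-1}W_j)=\emptyset$. You instead combine Lemma~\ref{L:35}(i) with Lemma~\ref{L:unicity1.1}(ii) and inclusion--exclusion, never touching Lemma~\ref{L:unicity1.2}. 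Your route highlights that the result is essentially a corollary of the Everett--Whaples fact that critical sets are closed under union; the paper's route feeds more directly into Lemma~\ref{L:unicity1.6}, where the complement-mapping viewpoint is reused. Incidentally, Lemma~\ref{L:unicity1.1}(i) would give your conclusion in one line: since $U\cap W_i=\emptyset$, that part yields $\sharp(F(U)\cap F(W_i))=\sharp F(\emptyset)=0$ immediately.
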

\begin{proof}
Using Lemma \ref{L:unicity1.2}, $W_i$ is a critical set of $F_{\bigcup^{i-1}_{j=1}W_j}$, i.e.,
\[
\sharp W_i
= \sharp F_{\bigcup^{i-1}_{j=1}W_j}(W_i)
= \sharp (F(W_i) \backslash F(\cup^{i-1}_{j=1}W_j))
\le \sharp F(W_i)
= \sharp W_i
\]
and equality holds. This implies the statement.
\end{proof}

\begin{lemma} \label{L:unicity1.4}
Let $(W_1, \ldots, W_m)$, $m \in \mathbb{N}$, be a Hall partition of $F$ and let $W \subset X$ 
be a critical and non-reducible set of $F$. There exists $1 \le i \le m$ such that $W = W_i$.
\end{lemma}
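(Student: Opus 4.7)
The plan is to locate $W$ inside the partition by choosing the smallest index $i$ such that $W \cap W_i \ne \emptyset$; minimality then forces $W \subset X \backslash \bigcup_{j=1}^{i-1} W_j$, so $W$ lies in the domain of $F' := F_{\bigcup_{j=1}^{i-1} W_j}$. By Lemma \ref{L:35} ``(i)'' (vacuous if $i=1$) together with Lemma \ref{L:23}, $F'$ satisfies the Hall condition, which is what is needed to invoke Lemma \ref{L:unicity1.1} on $F'$ later.

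The first key step is to verify that $W$ stays critical when passed to $F'$. For $i \ge 2$ both $W$ and $\bigcup_{j=1}^{i-1} W_j$ are critical sets of $F$ with empty intersection, so Lemma \ref{L:unicity1.1} ``(i)'' applied to $F$ gives
\[
\sharp\bigl(F(W) \cap F(\textstyle\bigcup_{j=1}^{i-1} W_j)\bigr)
= \sharp(W \cap \textstyle\bigcup_{j=1}^{i-1} W_j) = 0,
\]
and for $i=1$ this disjointness is trivial. Hence $F'(W) = F(W)$, so $\sharp F'(W) = \sharp W$ and $W$ is a critical set of $F'$.

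I would then split on whether $i = m$ or $i < m$. If $i = m$, the minimality of $i$ already forces $W \subset W_m$; since $W_m$ is non-reducible of $F'$ and $W$ is a nonempty critical subset of $W_m$ in $F'$, we conclude $W = W_m$. If $i \le m-1$, then $W_i$ is both critical and non-reducible of $F'$, so applying Lemma \ref{L:unicity1.1} ``(i)'' to $F'$ with the critical sets $W$ and $W_i$ shows $W \cap W_i$ is critical of $F'$; being nonempty (by choice of $i$) and contained in the non-reducible set $W_i$, it must equal $W_i$, giving $W_i \subset W$. To finish, I would lift $W_i$ to a critical set of $F$: because $W_i \subset W$,
\[
F(W_i) \cap F(\textstyle\bigcup_{j=1}^{i-1} W_j) \subset F(W) \cap F(\textstyle\bigcup_{j=1}^{i-1} W_j) = \emptyset,
\]
so $F'(W_i) = F(W_i)$ and $\sharp F(W_i) = \sharp W_i$. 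Then $W_i$ is a nonempty critical subset of the non-reducible set $W$ in $F$, forcing $W_i = W$.

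The main obstacle is the case $i = m$: the Hall partition definition does not require $W_m$ to be critical of $F'$, so the symmetric intersection argument via Lemma \ref{L:unicity1.1} is unavailable there. The resolution is that the minimality of $i$ alone already gives the inclusion $W \subset W_m$, after which the non-reducibility of $W_m$ in $F'$ closes the argument in one step.
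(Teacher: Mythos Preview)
Your proof is correct and follows the same overall strategy as the paper: pick the minimal index $i$ with $W\cap W_i\neq\emptyset$, pass to $F'=F_{\bigcup_{j=1}^{i-1}W_j}$, and use the intersection-of-critical-sets principle (Lemma~\ref{L:unicity1.1}) together with non-reducibility to force $W=W_i$.

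The implementations differ in two places worth noting. First, to show $W$ is critical for $F'$ the paper simply quotes Lemma~\ref{L:unicity1.2}, whereas you derive the stronger fact $F'(W)=F(W)$ from Lemma~\ref{L:unicity1.1} applied to $F$ with the disjoint critical sets $W$ and $\bigcup_{j=1}^{i-1}W_j$ (here you are implicitly using that $F$ itself satisfies the Hall condition, which is Theorem~\ref{T:32}). This pays off in your last step: once $W_i\subset W$, the identity $F(W_i)=F'(W_i)$ lets you lift $W_i$ to a critical set of $F$ and invoke the non-reducibility of $W$ \emph{in $F$}, which is exactly the hypothesis. The paper instead asserts that $W$ is non-reducible in $F'$ and concludes $W\cap W_{i_0}=W$ directly; that assertion is true, but your route makes the justification explicit. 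Second, you split off the case $i=m$ and handle it by the inclusion $W\subset W_m$ alone, avoiding any appeal to criticality of $W_m$; the paper's uniform phrasing ``by definition of Hall partitions, $W_{i_0}$ is a critical set'' does not literally cover $i_0=m$, so your treatment is cleaner on this point.
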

\begin{proof}
There exists a minimal $1 \le i_0 \le m$ such that $W \cap W_{i_0} \ne \emptyset$. Using 
Lemma \ref{L:unicity1.2}, $W$ is a critical set of  $F_{\bigcup^{i_0-1}_{j=1}W_j}$. By definition of 
Hall partitions, $W_{i_0}$ is also a critical set of $F_{\bigcup^{i_0-1}_{j=1}W_j}$. By 
Lemma \ref{L:unicity1.1}, $W \cap W_{i_0}$ is a critical set of $F_{\bigcup^{i_0-1}_{j=1}W_j}$. 
Since $W$ and $W_{i_0}$ are non-reducible sets of $F_{\bigcup^{i_0-1}_{j=1}W_j}$, 
$W \cap W_{i_0} = W = W_{i_0}$.
\end{proof}

We introduce a general notation for Hall partitions which are created by eliminating sets $W_i$ from a 
given Hall partition $(W_1, \ldots, W_m)$, $m \in \mathbb{N}$, of $F$. Let $(W_1, \ldots, W_m)$,
$m \in \mathbb{N}$, be a Hall partition of $F$ and $\{i_1, \ldots, i_k\} \subset \{1, \ldots, m\}$, where 
$1 \le k < m$. The term $(W_1, \ldots, W_m) \backslash \{i_1, \ldots, i_k\}$ denotes a tuple
$(W^\prime_1, \ldots, W^\prime_{m-k})$ with $m-k$ components in $\{W_1, \ldots, W_m\}$ where 
sets with indices in $\{i_1, \ldots, i_k\}$ had been removed. The order of the remaining sets is preserved.

It is clear that $W^\prime_1, \ldots, W^\prime_{m-k}$ is a partition of 
$X \backslash \bigcup^{k}_{l=1}W_{i_l}$. We do not know if $(W^\prime_1, \ldots, W^\prime_{m-k})$
is still a Hall partition.

\begin{lemma} \label{L:unicity1.6}
Let $(W_1, \ldots, W_m)$, $m \in \mathbb{N}$, be a Hall partition of $F$ and let $1 \le i \le m$ such
that $W_i$ is a critical set of $F$. Let $(W^\prime_1, \ldots, W^\prime_{m-1})$ be the tuple 
$(W_1, \ldots, W_m) \backslash \{i\}$.
\[
((F_{W_i})_{\bigcup^{l-1}_{j=1}W^\prime_j})_{\mid W^\prime_l} =
\begin{cases}
(F_{\bigcup^{l-1}_{j=1}W_j})_{\mid W_l}      & \mbox{ for }l = 1, \ldots, i-1, \\
(F_{\bigcup^{l}_{j=1}W_j})_{\mid W_{l+1}}  & \mbox{ for }l = i, \ldots, m-1.
\end{cases}
\]
\end{lemma}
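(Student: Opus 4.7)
The plan is to unwind the bracket notations on both sides and show the equalities pointwise, using Lemma \ref{L:21}(ii) to collapse iterated complement mappings and Lemma \ref{L:unicity1.3} to eliminate the extra $W_i$ that was dropped from the front. The main observation is that $W'_l = W_l$ for $l \le i-1$ and $W'_l = W_{l+1}$ for $l \ge i$, and correspondingly
\[
\bigcup_{j=1}^{l-1} W'_j = \bigcup_{j=1}^{l-1} W_j \quad (l \le i-1), \qquad
\bigcup_{j=1}^{l-1} W'_j = \bigl(\bigcup_{j=1}^{l} W_j\bigr) \setminus W_i \quad (l \ge i).
\]

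First I would handle $l \ge i$, since here a simple application of Lemma \ref{L:21}(ii) suffices. Combining the set identity above with that lemma gives $(F_{W_i})_{\bigcup_{j=1}^{l-1} W'_j} = F_{W_i \cup \bigcup_{j=1}^{l-1} W'_j} = F_{\bigcup_{j=1}^{l} W_j}$, and restricting both sides to $W'_l = W_{l+1}$ yields the claim. One should just remark that $W_{l+1} \subset X \setminus \bigcup_{j=1}^{l} W_j$, so the restriction is well-defined.

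For the case $l \le i-1$ the same manipulation gives $(F_{W_i})_{\bigcup_{j=1}^{l-1} W_j} = F_{W_i \cup \bigcup_{j=1}^{l-1} W_j}$, but now we need to further argue that throwing in the extra $W_i$ into the subscript does not change the values on $W_l$. For $x \in W_l$, by definition of the complement mapping
\[
F_{W_i \cup \bigcup_{j=1}^{l-1} W_j}(x) = F(x) \setminus \bigl(F(W_i) \cup F(\bigcup_{j=1}^{l-1} W_j)\bigr),
\]
so it suffices to show $F(x) \cap F(W_i) = \emptyset$. Since $x \in W_l$ with $l < i$ and $W_i$ is critical in $F$, Lemma \ref{L:unicity1.3} applied to the Hall partition $(W_1, \ldots, W_m)$ gives $F(W_i) \cap F(W_l) = \emptyset$, and hence $F(W_i) \cap F(x) = \emptyset$. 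This reduces the expression to $F(x) \setminus F(\bigcup_{j=1}^{l-1} W_j) = F_{\bigcup_{j=1}^{l-1} W_j}(x)$, finishing the case.

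I expect no serious obstacle; the only care needed is the bookkeeping of which indices of the new tuple $(W'_1, \ldots, W'_{m-1})$ correspond to which indices of the original tuple, and a clean invocation of Lemma \ref{L:unicity1.3} (which itself needs the hypothesis that $W_i$ is critical as a set of $F$, not merely of $F_{\bigcup_{j=1}^{i-1} W_j}$) to handle the first case.
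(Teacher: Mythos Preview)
Your proposal is correct and follows essentially the same approach as the paper: both proofs use Lemma~\ref{L:21}(ii) to collapse the iterated complement mapping into a single one, and then invoke Lemma~\ref{L:unicity1.3} (that $F(W_i)\cap F(W_l)=\emptyset$ for $l<i$) to show that the extra $F(W_i)$ in the subscript is harmless on $W_l$. The only differences are cosmetic---you treat the case $l\ge i$ first and are a bit more explicit about the index bookkeeping for $W'_l$.
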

\begin{proof}
Let $1 \le l \le i-1$. Using Lemma \ref{L:unicity1.3}, $F(W_i) \cap F(W_l) = \emptyset$ and we obtain 
\[
((F_{W_i})_{\bigcup^{l-1}_{j=1}W^\prime_j})_{\mid W^\prime_l}
= ((F_{W_i})_{\bigcup^{l-1}_{j=1}W_j})_{\mid W_l}
= (F_{W_i \cup \bigcup^{l-1}_{j=1}W_j})_{\mid W_l}.
\]
This implies
\[
((F_{W_i})_{\bigcup^{l-1}_{j=1}W^\prime_j})(x)
= F(x) \backslash (F(W_i) \cup F(\cup^{l-1}_{j=1}W_j))
\]
\[
= F(x) \backslash F(\cup^{l-1}_{j=1}W_j)
= F_{\bigcup^{l-1}_{j=1}W_j}(x)
\]
for $x \in W_l =W^\prime_l$.

Let $i \le l \le m-1$. 
\[
((F_{W_i})_{\bigcup^{l-1}_{j=1}W^\prime_j})_{\mid W^\prime_l}
= (F_{W_i \cup \bigcup^{l-1}_{j=1}W^\prime_j})_{\mid W_{l+1}}
= (F_{\bigcup^{l}_{j=1}W_j})_{\mid W_{l+1}}.
\]
\end{proof}

\begin{lemma} \label{L:unicity1.8}
Let $(W_1, \ldots, W_m)$, $m \in \mathbb{N}$, be a Hall partition of $F$ and let $1 \le i \le m$
such that $W_i$ is a critical set of $F$. The tuple $(W^\prime_1, \ldots, W^\prime_{m-1})=
(W_1, \ldots, W_m) \backslash \{i\}$ is a Hall partition of $F_{W_i}$.
\end{lemma}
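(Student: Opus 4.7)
The plan is to verify the three defining properties of a Hall partition (Definition \ref{D:partition}) for the tuple $(W^\prime_1, \ldots, W^\prime_{m-1})$ with respect to $F_{W_i}$, by pulling each condition back to the already-established properties of the original Hall partition $(W_1, \ldots, W_m)$ of $F$ via the rewriting provided by Lemma \ref{L:unicity1.6}.

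First I would observe that $W^\prime_1, \ldots, W^\prime_{m-1}$ is a partition of $X \backslash W_i$, which is exactly the domain of $F_{W_i}$; this is immediate from the construction. The remaining work is to check the three properties (i), (ii), (iii) in Definition \ref{D:partition} for each index $l = 1, \ldots, m-1$. I would split this into two cases according to whether $l \le i-1$ or $l \ge i$, matching the two lines of Lemma \ref{L:unicity1.6}.

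In the first case $1 \le l \le i-1$, Lemma \ref{L:unicity1.6} gives
\[
((F_{W_i})_{\bigcup^{l-1}_{j=1}W^\prime_j})_{\mid W^\prime_l} = (F_{\bigcup^{l-1}_{j=1}W_j})_{\mid W_l},
\]
so each of (i), (ii), (iii) for the new partition at index $l$ is literally the corresponding property of $(W_1, \ldots, W_m)$ at index $l$. In the second case $i \le l \le m-1$, Lemma \ref{L:unicity1.6} gives
\[
((F_{W_i})_{\bigcup^{l-1}_{j=1}W^\prime_j})_{\mid W^\prime_l} = (F_{\bigcup^{l}_{j=1}W_j})_{\mid W_{l+1}},
\]
so (i), (ii), (iii) at index $l$ of the new partition translate into the corresponding properties of $(W_1, \ldots, W_m)$ at index $l+1$. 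Here the only point worth a brief sanity check is (iii): we need $W^\prime_l$ critical in the new truncated mapping for $l \le m-2$, which translates to $W_{l+1}$ critical in $F_{\bigcup^{l}_{j=1}W_j}$ for $l+1 \le m-1$, and this is exactly what Definition \ref{D:partition} (iii) guarantees for the original Hall partition.

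I do not expect a real obstacle: once Lemma \ref{L:unicity1.6} is in hand, the statement reduces to a bookkeeping check that the index ranges line up in both cases. The criticality of $W_i$ in $F$ is used only indirectly, through the identity of Lemma \ref{L:unicity1.6}, and nothing further needs to be proved from scratch.
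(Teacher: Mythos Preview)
Your proposal is correct and is essentially the same as the paper's proof: the paper simply states that the lemma follows from Lemma \ref{L:unicity1.6} together with the Hall-partition properties of $(W_1,\ldots,W_m)$, and your write-up spells out precisely that verification, case by case. The only difference is level of detail; the underlying argument is identical.
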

\begin{proof}
The statement follows from Lemma \ref{L:unicity1.6}, since $(W_1, \ldots, W_m)$ is a Hall partition of $F$.
\end{proof}

\begin{lemma} \label{L:unicity1.9}
Let $(W_1, \ldots, W_m)$, $m \in \mathbb{N}$, and $(W^\prime_1, \ldots, W^\prime_{m^\prime})$,
$m^\prime \in \mathbb{N}$, be Hall partitions of $F$ and let $1 \le k \le m^\prime -1$. There exists a subset
$\{i_1, \ldots, i_k\} \subset \{1, \ldots, m\}$ such that $W^\prime_j = W_{i_j}$ for $j=1, \ldots, k$ and
$(W_1, \ldots, W_m) \backslash$ $\{i_1, \ldots, i_k\}$ and 
$(W^\prime_{k+1}, \ldots, W^\prime_{m^\prime})$ are Hall partitions of 
$F_{\bigcup_{j=1}^{k}W_{i_j}}=F_{\bigcup_{j=1}^{k}W^\prime_j}$.
\end{lemma}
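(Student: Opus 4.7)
The plan is to prove the statement by induction on $k$, using Lemmas \ref{L:unicity1.4} and \ref{L:unicity1.8} as the main tools. The idea is that the first un-matched block of the prime partition must coincide with some block of the other partition; once this pair is stripped from both sides, we are reduced to the same matching problem one level deeper, at the complement mapping.

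For the base case $k = 1$: since $1 \le k \le m' - 1$, the partition $(W'_1, \ldots, W'_{m'})$ has at least two blocks, so conditions (ii) and (iii) of Definition \ref{D:partition} make $W'_1$ a critical and non-reducible set of $F$. Lemma \ref{L:unicity1.4} then provides an index $i_1 \in \{1, \ldots, m\}$ with $W'_1 = W_{i_1}$, and Lemma \ref{L:unicity1.8} guarantees that $(W_1, \ldots, W_m) \backslash \{i_1\}$ is a Hall partition of $F_{W_{i_1}} = F_{W'_1}$; the tail $(W'_2, \ldots, W'_{m'})$ is directly a Hall partition of the same mapping.

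For the inductive step I would assume the statement for $k - 1$ (with $2 \le k \le m' - 1$). This delivers indices $i_1, \ldots, i_{k-1}$ with $W'_j = W_{i_j}$ for $j = 1, \ldots, k-1$, and both $(W_1, \ldots, W_m) \backslash \{i_1, \ldots, i_{k-1}\}$ and $(W'_k, \ldots, W'_{m'})$ are Hall partitions of the common mapping $G := F_{\bigcup_{j=1}^{k-1} W_{i_j}} = F_{\bigcup_{j=1}^{k-1} W'_j}$. Since $k \le m' - 1$, the tail $(W'_k, \ldots, W'_{m'})$ has at least two blocks, so $W'_k$ is critical and non-reducible in $G$. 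Applying Lemma \ref{L:unicity1.4} to the Hall partition $(W_1, \ldots, W_m) \backslash \{i_1, \ldots, i_{k-1}\}$ of $G$ and the set $W'_k$ yields an index $i_k \in \{1, \ldots, m\} \backslash \{i_1, \ldots, i_{k-1}\}$ with $W'_k = W_{i_k}$. A final application of Lemma \ref{L:unicity1.8} to $G$ and the critical set $W_{i_k}$ completes the step: by Lemma \ref{L:21}(ii), $G_{W_{i_k}} = F_{\bigcup_{j=1}^{k} W_{i_j}}$, and $(W'_{k+1}, \ldots, W'_{m'})$ is a Hall partition of this same mapping.

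I expect the main obstacle to be purely notational, namely keeping track of the chain of identifications $G_{W_{i_k}} = F_{\bigcup_{j=1}^{k} W_{i_j}} = F_{\bigcup_{j=1}^{k} W'_j}$ and confirming at each stage that the two reduced tuples really are Hall partitions of this common mapping. The structural content is entirely absorbed by the earlier lemmas in this section, so no essentially new combinatorial argument is needed.
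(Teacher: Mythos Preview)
Your proposal is correct and follows essentially the same approach as the paper: induction on $k$, with the base case and the inductive step each handled by an application of Lemma~\ref{L:unicity1.4} (to locate the matching block $W_{i_k}$) followed by Lemma~\ref{L:unicity1.8} (to strip it and obtain the two reduced Hall partitions). Your write-up is in fact slightly more explicit than the paper's, for instance in noting that $k\le m'-1$ forces the tail to have at least two blocks so that $W'_k$ is critical, and in invoking Lemma~\ref{L:21}(ii) for the identification $G_{W_{i_k}}=F_{\bigcup_{j=1}^{k}W_{i_j}}$.
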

\begin{proof}
We prove the statement by induction on $k=1, \ldots, m^\prime -1$. The statement is true for $k=1$ 
by Lemma \ref{L:unicity1.4} and \ref{L:unicity1.8}. Let $2 \le k \le m^\prime -1$. By induction hypothesis 
there exists a subset $\{i_1, \ldots, i_{k-1}\} \subset \{1, \ldots, m\}$ such that $W^\prime_j = W_{i_j}$ 
for $j=1, \ldots, k-1$, $(W_1, \ldots, W_m) \backslash$ $\{i_1, \ldots, i_{k-1}\}$ and 
$(W^\prime_{k}, \ldots, W^\prime_{m^\prime})$ are Hall partitions of 
$F_{\bigcup_{j=1}^{k-1}W_{i_j}}=F_{\bigcup_{j=1}^{k-1}W^\prime_j}$. 

$W^\prime_k$ is a critical and non-reducible set of 
$F_{\bigcup^{k-1}_{j=1}W^\prime_j}=F_{\bigcup^{k-1}_{j=1}W_{i_j}}$.
By Lemma \ref{L:unicity1.4}, there exists $i_k \in \{1, \ldots, m\} \backslash \{i_1, \ldots, i_{k-1}\}$
such that $W^\prime_k=W_{i_k}$. By Lemma \ref{L:unicity1.8}, 
$(W_1, \ldots, W_m) \backslash \{i_1, \ldots, i_k\}$ is a Hall partition of $F_{\bigcup_{j=1}^{k}W_{i_j}}$
and $(W^\prime_{k+1}, \ldots, W^\prime_{m^\prime})$ is a Hall partition 
of $F_{\bigcup_{j=1}^{k}W^\prime_j}$.
\end{proof}

\begin{theorem} \label{T:unicity1.1}
Let $(W_1, \ldots, W_m)$, $m \in \mathbb{N}$, and $(W_1^\prime, \ldots, W^\prime_{m^\prime})$,
$m^\prime \in \mathbb{N}$, be Hall partitions of $F$. $m=m^\prime$ and there exists a 
renumbering $i_1, \ldots, i_m$ of $1, \ldots, m$ such that $W^\prime_j = W_{i_j}$ for $j=1, \ldots, m$.
\end{theorem}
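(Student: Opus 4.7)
The plan is to reduce the theorem to Lemma \ref{L:unicity1.9} applied at the maximal admissible value of $k$, and then use the non-reducibility axiom of a Hall partition to force the two lengths to coincide. Without loss of generality I assume $m \le m^\prime$ (the situation is symmetric in the two partitions).

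First I would dispose of the degenerate case $m = 1$. In that case $W_1 = X$, so $X$ is a non-reducible set of $F$ by the Hall partition axioms. If $m^\prime \ge 2$ held, the Hall partition $(W_1^\prime, \ldots, W_{m^\prime}^\prime)$ would exhibit a critical proper subset $W_1^\prime \subsetneq X$ of $F$, contradicting the non-reducibility of $X$. Hence $m^\prime = 1$ and $W_1 = X = W_1^\prime$.

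Now suppose $m \ge 2$. I apply Lemma \ref{L:unicity1.9} with $k = m - 1$, which is legal because $m - 1 \le m^\prime - 1$. This yields indices $i_1, \ldots, i_{m-1} \in \{1, \ldots, m\}$ with $W_j^\prime = W_{i_j}$ for $j = 1, \ldots, m-1$, and the two tuples
\[
(W_1, \ldots, W_m) \backslash \{i_1, \ldots, i_{m-1}\} \quad \text{and} \quad (W_m^\prime, \ldots, W_{m^\prime}^\prime)
\]
are both Hall partitions of the same residual mapping $G = F_{\bigcup_{j=1}^{m-1} W_j^\prime}$. The first of these has only $m - (m-1) = 1$ entry, namely the unique remaining block $W_{i_m}$ with $i_m \in \{1, \ldots, m\} \backslash \{i_1, \ldots, i_{m-1}\}$. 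By the Hall partition axioms applied to the one-block partition $(W_{i_m})$, the set $W_{i_m}$ is non-reducible for $G$ and coincides with the whole domain of $G$. If $m^\prime > m$ held, the Hall partition $(W_m^\prime, \ldots, W_{m^\prime}^\prime)$ would have at least two parts, so Definition \ref{D:partition}(iii) would make its first block $W_m^\prime$ a critical proper subset of the domain $W_{i_m}$ of $G$, contradicting the non-reducibility of $W_{i_m}$. Hence $m^\prime = m$, and the unique remaining block $W_{m^\prime}^\prime$ in the second tuple must equal $W_{i_m}$, completing the renumbering.

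The main obstacle is obtaining the equality $m = m^\prime$: Lemma \ref{L:unicity1.9} only matches the first $m^\prime - 1$ blocks and cannot be pushed to $k = m^\prime$. The final equality of lengths has to be extracted from the structural incompatibility between a single-block Hall partition on a given domain (which certifies non-reducibility of that whole domain) and a multi-block Hall partition on the same domain (whose first block is necessarily critical and proper).
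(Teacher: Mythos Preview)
Your proof is correct. The paper takes a slightly different route to establish $m = m'$: it applies Lemma~\ref{L:unicity1.9} with $k = m' - 1$, notes that the resulting tuple $(W_1,\ldots,W_m)\backslash\{i_1,\ldots,i_{m'-1}\}$ is a Hall partition of a nonempty domain and hence has at least one block, giving $m' \le m$; it then swaps the roles of the two partitions to obtain $m \le m'$. Your argument instead fixes $m \le m'$ by symmetry up front, applies the lemma once with $k = m - 1$, and then exploits the incompatibility between a one-block Hall partition (which certifies that its domain is non-reducible) and a multi-block Hall partition on the same domain (whose first block is critical and proper by Definition~\ref{D:partition}(iii)). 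The paper's double application via symmetry avoids your explicit structural contradiction, while your approach makes the obstruction to $m' > m$ more transparent and also handles the $m=1$ boundary case explicitly, which the paper's formulation glosses over.
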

\begin{proof}
We apply Lemma \ref{L:unicity1.9} with $k=m^\prime -1$. There exists a subset
$\{i_1, \ldots, i_{m^\prime -1}\} \subset \{1, \ldots, m\}$ such that 
$W^\prime_j = W_{i_j}$ for $j=1, \ldots, m^\prime -1$. This shows $m^\prime \le m$. 
Interchanging the role of $m$ and $m^\prime$ shows $m \le m^\prime$, i.e. $m=m^\prime$.

Let $i_{m^\prime} \in \{1, \ldots, m\} \backslash \{i_1, \ldots, i_{m^\prime -1}\}$. Then
\[
W^\prime_{m^\prime} 
= X \backslash \bigcup^{m^\prime -1}_{j=1}W^\prime_j
= X \backslash \bigcup^{m^\prime -1}_{j=1}W_{i_j}
= X \backslash \bigcup^{m}_{\begin{subarray}{c}j=1 \\ j \ne i_{m^\prime}\end{subarray}}W_{j}
= W_{i_{m^\prime}},
\]
i.e., $W^\prime_j = W_{i_j}$ for $j=1, \ldots, m^\prime$.
\end{proof}

The preceeding theorem states exactly the desired result. The Hall partition $(W_1, \ldots, W_m)$ 
of a set-valued mapping is (up to renumbering) uniquely determined.

If $(W_1, \ldots, W_m)$, $m \in \mathbb{N}$, is a Hall partition of $F$, the sets $W_1, \ldots, W_m$
are uniquely determined, but, in general, the ordering of $W_1, \ldots, W_m$ is not uniquely determined.

\begin{example} \label{E:unicity1.1}
Let $W_1=\{1,2\}$, $W_2=\{3,4\}$ and $W_3=\{5,6\}$ in Fig. \ref{Fig4}. In all three mappings 
$(W_1, W_2, W_3)$ is a Hall partition of $F$. On the left any other ordering of $(W_1, W_2, W_3)$ 
is also a Hall partition. In the middle $W_1$ is always a predecessor of $W_2$ and $W_3$, but the 
ordering between $W_2$ and $W_3$ is arbitrary. On the right the ordering $(W_1, W_2, W_3)$
is uniquely determined. A renumbering is not possible.
\end{example}

\begin{figure}
\includegraphics{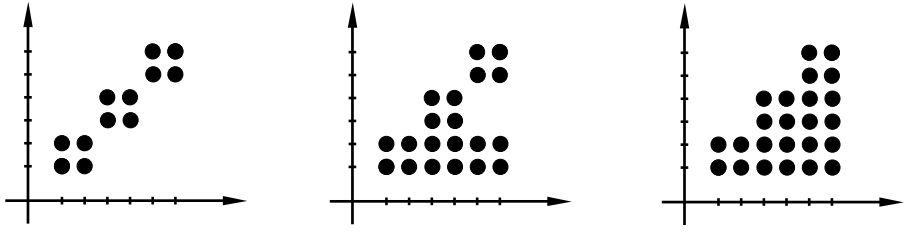}
\caption{Hall partitions} 
\label{Fig4}
\end{figure}

                                        %
                                        %
\section{Unicity of an Alldifferent Selection} \label{S:unicity2}
We show an equivalent condition, when a set-valued mapping admits a unique alldifferent selection.

\begin{lemma} \label{L:unicity2.2}
Let $(W_1, \ldots, W_m)$, $m \in \mathbb{N}$, be a Hall partition of $F$. The following statements 
are equivalent:
\begin{enumerate}[(i)] 
\item $F^p(x)$ is a singleton for each $x \in X$.  
\item $m=\sharp F(X)$.
\end{enumerate}
\end{lemma}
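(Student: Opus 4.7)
The plan is to reduce both directions to a single counting identity and then use the non-reducibility property in the definition of Hall partitions. All the needed tools are already available: Lemma \ref{L:34}(i) gives $F(X) = F^p(X)$, Lemma \ref{L:31}(iii) gives disjointness of the $F^p(W_i)$, and the definition of Hall partitions controls $\sharp F^p(W_i)$ for each $i$.

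First I would record the identity
\[
\sharp F(X) = \sum_{i=1}^{m} \sharp F^p(W_i),
\]
which follows from Lemma \ref{L:34}(ii) combined with $F^p_{\mid W_i} = (F_{\bigcup_{j=1}^{i-1}W_j})_{\mid W_i}$. Next I would bound each summand using Definition \ref{D:partition}: for $i=1,\ldots,m-1$, $W_i$ is a critical set of $F_{\bigcup_{j=1}^{i-1}W_j}$, hence $\sharp F^p(W_i) = \sharp W_i$; for $i=m$, Lemma \ref{L:26}(i) applied to $F_{\bigcup_{j=1}^{m-1}W_j}$ gives $\sharp F^p(W_m) \ge \sharp W_m$. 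Adding yields $\sharp F(X) \ge \sharp X \ge m$.

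For the direction (ii) $\Rightarrow$ (i), equality $\sharp F(X) = m$ forces equality throughout: $\sharp W_i = 1$ for every $i$, and $\sharp F^p(W_m) = \sharp W_m = 1$. For $i<m$ we already have $\sharp F^p(W_i) = \sharp W_i = 1$. Since each $W_i$ is a singleton $\{x_i\}$ with $\sharp F^p(x_i) = 1$, statement (i) follows.

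For the direction (i) $\Rightarrow$ (ii), the main obstacle is to show that singleton-valued $F^p$ forces each $W_i$ to be a singleton; once this is done, the counting identity closes the argument. Assume $F^p(x)$ is a singleton for every $x$, and fix $i$ and any $x\in W_i$. Then $F_{\bigcup_{j=1}^{i-1}W_j}(\{x\}) = F^p(x)$ has size $1 = \sharp \{x\}$, so $\{x\}$ is a critical subset of $F_{\bigcup_{j=1}^{i-1}W_j}$. Since $W_i$ is non-reducible in $F_{\bigcup_{j=1}^{i-1}W_j}$, no proper subset of $W_i$ can be critical, hence $W_i = \{x\}$. Therefore each $W_i$ is a singleton, and using $F(X) = F^p(X)$ together with the disjointness of $F^p(W_1),\ldots,F^p(W_m)$ from Lemma \ref{L:31}(iii), we conclude $\sharp F(X) = \sum_{i=1}^m \sharp F^p(W_i) = m$, establishing (ii).
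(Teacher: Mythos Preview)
Your proof is correct and rests on the same counting identity as the paper, namely Lemma~\ref{L:34}(ii) rewritten as $\sharp F(X)=\sum_{i=1}^{m}\sharp F^p(W_i)$. The paper's proof consists of that single line together with ``and this shows the statement''; you have supplied the details it leaves implicit. In particular, your use of non-reducibility for (i)$\Rightarrow$(ii)---that a singleton value of $F^p$ at $x\in W_i$ makes $\{x\}$ critical in $F_{\bigcup_{j=1}^{i-1}W_j}$, forcing $W_i=\{x\}$---is exactly the step the paper's terse proof needs (and which reappears explicitly in Lemma~\ref{L:unicity2.3}). For (ii)$\Rightarrow$(i) you take a slight detour through $\sharp F(X)\ge\sharp X\ge m$; the more direct reading of the paper's line is simply that each $\sharp F^p(W_i)\ge 1$, so $\sharp F(X)=m$ forces every $\sharp F^p(W_i)=1$, whence every $F^p(x)$ is a singleton. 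Both routes are valid and equivalent in spirit.
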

\begin{proof}
Using Lemma \ref{L:34} ``(ii)",
\[
\sharp F(X) = \sum^m_{i=1} \sharp F_{\bigcup^{i-1}_{j=1}W_j}(W_i) = \sum^m_{i=1} \sharp F^p(W_i)
\]
and this shows the statement.
\end{proof}

\begin{theorem} \label{T:unicity2.1}
The following statements are equivalent:
\begin{enumerate}[(i)] 
\item $F$ admits a unique alldifferent selection.  
\item $F^*(x)$ is a singleton for each $x \in X$.  
\item $F$ admits a Hall partition and $F^p(x)$ is a singleton for each $x \in X$.  
\item $F$ admits a Hall partition $(W_1, \ldots, W_m)$, $m \in \mathbb{N}$, such that $m=\sharp F(X)$.
\end{enumerate}
\end{theorem}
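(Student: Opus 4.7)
The plan is to establish the cyclic chain of implications
(i) $\Rightarrow$ (ii) $\Rightarrow$ (iii) $\Rightarrow$ (iv) $\Rightarrow$ (i).
All four conditions are essentially restatements of one another once Theorem \ref{T:41} and Lemma \ref{L:unicity2.2} are available, so the proof is short and no step is truly hard; the role of the main bridge is played by the identity $F^* = F^p$.

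For ``(i) $\Rightarrow$ (ii)'' I would argue directly from the definition of $F^*$: if $s$ is the unique alldifferent selection of $F$ and $y \in F^*(x)$ for some $x \in X$, then by definition there is an alldifferent selection $s'$ with $s'(x) = y$. Unicity forces $s' = s$, hence $y = s(x)$, so $F^*(x) = \{s(x)\}$.

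For ``(ii) $\Rightarrow$ (iii)'' I would note that if every $F^*(x)$ is a singleton then $F^*$ has nonempty images, so $F$ admits an alldifferent selection. The Marriage Theorem (Theorem \ref{T:42}) then yields a Hall partition of $F$, and Theorem \ref{T:41} gives $F^p = F^*$, so each $F^p(x)$ is a singleton. The implication ``(iii) $\Rightarrow$ (iv)'' is immediate from Lemma \ref{L:unicity2.2}.

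Finally, for ``(iv) $\Rightarrow$ (i)'' I would first apply Lemma \ref{L:unicity2.2} to the Hall partition provided by (iv) to conclude that $F^p(x)$ is a singleton for each $x \in X$, and then invoke Theorem \ref{T:41} to upgrade this to $F^*(x)$ being a singleton. Existence of an alldifferent selection follows from the Marriage Theorem applied to the existing Hall partition. Uniqueness follows because any two alldifferent selections $s, s'$ satisfy $s(x), s'(x) \in F^*(x)$, and a singleton forces $s = s'$. The only mild obstacle to watch is making sure in the final step that we really have both existence (which comes from (ii) of Theorem \ref{T:42}) and uniqueness (which comes from the singleton property of $F^*$); these two ingredients together give (i).
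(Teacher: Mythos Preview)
Your proposal is correct and follows essentially the same route as the paper: the paper proves the pairwise equivalences (i)$\Leftrightarrow$(ii) ``straightforward'', (ii)$\Leftrightarrow$(iii) via Theorems~\ref{T:31} and~\ref{T:41}, and (iii)$\Leftrightarrow$(iv) via Lemma~\ref{L:unicity2.2}, whereas you arrange the same ingredients into a cyclic chain. The only cosmetic difference is that you close the loop with (iv)$\Rightarrow$(i) by reusing Lemma~\ref{L:unicity2.2} and Theorem~\ref{T:41} once more, while the paper relies on the bidirectional equivalences; both arguments invoke exactly the same results.
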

\begin{proof}
The equivalence of ``(i)" and ``(ii)" is straightforward. The equivalence of ``(ii)" and ``(iii)" had been 
proved in Theorems \ref{T:31} and \ref{T:41}. The equivalence of ``(iii)" and ``(iv)" is 
Lemma \ref{L:unicity2.2}.
\end{proof}

Under the assumption of the statements in Theorem \ref{T:unicity2.1} we obtain a statement on 
$\sharp X$.

\begin{lemma} \label{L:unicity2.3}
Let $(W_1, \ldots, W_m)$, $m \in \mathbb{N}$, be a Hall partition of $F$ such that $F^p(x)$ is a 
singleton for $x \in X$. $m=\sharp X$.
\end{lemma}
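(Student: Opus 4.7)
The plan is to combine Lemma \ref{L:unicity2.2}, which immediately gives $m=\sharp F(X)$, with the observation that under the hypothesis $F^p$ must be the unique alldifferent selection of $F$ and therefore injective, forcing $\sharp F(X)=\sharp X$.

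First, since $(W_1,\ldots,W_m)$ is a Hall partition with $F^p(x)$ a singleton for every $x \in X$, Lemma \ref{L:unicity2.2} gives at once
\[
m = \sharp F(X).
\]
So the task reduces to proving $\sharp F(X) = \sharp X$.

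For this, I would invoke Theorem \ref{T:41} and the equivalence in Theorem \ref{T:unicity2.1}: the existence of the Hall partition yields $F^* = F^p$, and since $F^p(x)$ is a singleton everywhere, $F$ admits a unique alldifferent selection, which (identified with the point-valued mapping it defines) is exactly $F^p$. Being alldifferent, this selection is injective on $X$, so
\[
\sharp F^p(X) = \sharp X.
\]
Now apply Lemma \ref{L:34}(i): since $F^p_{\mid W_i} = (F_{\bigcup_{j=1}^{i-1}W_j})_{\mid W_i}$ by the definition of $F^p$, one has
\[
F(X) \;=\; \bigcup_{i=1}^{m} F_{\bigcup_{j=1}^{i-1}W_j}(W_i) \;=\; \bigcup_{i=1}^{m} F^p(W_i) \;=\; F^p(X),
\]
and hence $\sharp F(X) = \sharp F^p(X) = \sharp X$. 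Combining with the first step yields $m = \sharp X$.

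There is essentially no obstacle here: all the hard work has already been carried out in Lemmas \ref{L:34} and \ref{L:unicity2.2} and in Theorems \ref{T:41} and \ref{T:unicity2.1}. The only point that deserves a line of explanation is the identity $F(X) = F^p(X)$, which is transparent once $F^p$ is unfolded against the decomposition of $F(X)$ provided by Lemma \ref{L:34}(i).
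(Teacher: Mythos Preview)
Your proof is correct, but it follows a different route from the paper's. The paper argues directly by contradiction: since the $W_i$ partition $X$ into nonempty sets one always has $m \le \sharp X$; if $m < \sharp X$ then some $W_i$ has at least two elements, and for any $x \in W_i$ the singleton hypothesis makes $\{x\}$ a proper critical subset of $W_i$ for $F_{\bigcup_{j<i}W_j}$, contradicting the non-reducibility of $W_i$ in the Hall partition. This is a three-line argument using nothing beyond Definition~\ref{D:partition}.

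Your approach instead routes through Lemma~\ref{L:unicity2.2} to obtain $m=\sharp F(X)$ and then establishes $\sharp F(X)=\sharp X$ by recognising $F^p$ as the unique alldifferent (hence injective) selection and invoking Lemma~\ref{L:34}(i) for $F(X)=F^p(X)$. This is entirely valid and illustrates nicely how the pieces fit together, but it leans on Theorems~\ref{T:41} and~\ref{T:unicity2.1} where the paper needs only the definition of a Hall partition. The paper's argument is shorter and more self-contained; yours highlights the conceptual point that the singleton hypothesis forces $F^p$ to be a bijection onto $F(X)$.
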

\begin{proof}
Suppose $m < \sharp X$. There exists $1 \le i \le m$ such that $\sharp W_i > 1$. Choose $x \in W_i$.
$\{x\} \ne W_i$ is a critical set of $F^p_{\mid W_i}=(F_{\bigcup^{i-1}_{j=1}W_j})_{\mid W_i}$.
This contradicts the definition of Hall partitions, since $W_i$ is a non-reducible set.
\end{proof}

\begin{lemma} \label{L:unicity2.4}
Let $(W_1, \ldots, W_m)$, $m \in \mathbb{N}$, be a Hall partition of $F$ such that $\sharp W_i = 1$ 
for $i=1, \ldots, m$. $m=\sharp X$.
\end{lemma}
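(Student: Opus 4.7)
The plan is to argue directly from the definition of a partition. By the definition of Hall partition, $W_1, \ldots, W_m$ is a partition of $X$, so in particular the $W_i$ are pairwise disjoint with union equal to $X$. Therefore $\sharp X = \sum_{i=1}^{m} \sharp W_i$.

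Under the hypothesis $\sharp W_i = 1$ for each $i = 1, \ldots, m$, this sum reduces to $\sum_{i=1}^m 1 = m$, so $m = \sharp X$ follows immediately. There is no real obstacle here; the statement is a one-line consequence of the partition axiom together with the cardinality hypothesis, and no machinery from earlier sections (Hall condition, non-reducibility, complement mapping) needs to be invoked. The lemma is likely included only to complement Lemma \ref{L:unicity2.3} and to make the dependence of $m$ on $\sharp X$ explicit when all blocks of the Hall partition are singletons.
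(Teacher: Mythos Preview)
Your proof is correct and matches the paper's own proof exactly: the paper simply writes $\sharp X = \sum_{i=1}^{m} \sharp W_i = m$. Your additional commentary about the lemma's role is accurate but not part of the formal argument.
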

\begin{proof}
We use the equation $\sharp X = \sum^m_{i=1} \sharp W_i = m$.
\end{proof}

\begin{lemma} \label{L:unicity2.5}
Let $(W_1, \ldots, W_m)$, $m \in \mathbb{N}$, be a Hall partition of $F$ such that $m=\sharp F(X)$.
$m=\sharp X$.
\end{lemma}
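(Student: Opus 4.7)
The plan is to sandwich $m$ between $\sharp F(X)$ and $\sharp X$ from two different directions and let the hypothesis $m = \sharp F(X)$ force equality throughout. The two ingredients are: an upper estimate $m \le \sharp X$ coming from the fact that a partition into $m$ nonempty pieces must have total cardinality at least $m$, and a lower estimate $m \ge \sharp X$ coming from the Hall condition applied to $W = X$.

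More concretely, I would first observe that $(W_1, \ldots, W_m)$ is a partition of $X$, so each $W_i$ is nonempty and
\[
\sharp X \;=\; \sum_{i=1}^{m} \sharp W_i \;\ge\; \sum_{i=1}^{m} 1 \;=\; m.
\]
Next I would invoke Theorem \ref{T:32}: the existence of a Hall partition of $F$ implies that $F$ satisfies the Hall condition. Applied to $W = X$, this gives $\sharp F(X) \ge \sharp X$, i.e.
\[
m \;=\; \sharp F(X) \;\ge\; \sharp X.
\]
Combining the two inequalities yields $m \ge \sharp X \ge m$, so $m = \sharp X$, as desired.

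There is no real obstacle here; the statement is essentially a counting tautology once one recognizes that the hypothesis forces the Hall bound $\sharp F(X) \ge \sharp X$ to be tight at its minimal possible value $m$. An alternative route would be to chain Lemmas \ref{L:unicity1.1} and Lemma \ref{L:unicity2.2} (deducing that $F^p(x)$ is a singleton at every $x$) and then apply Lemma \ref{L:unicity2.3}, but the direct sandwich argument above is shorter and avoids introducing $F^p$.
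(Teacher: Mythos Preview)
Your proof is correct and uses the same sandwich idea as the paper: both arguments pin $\sharp X$ between $m$ and $\sharp F(X)=m$. The paper obtains the inequality $\sharp F(X)\ge \sharp X$ by unpacking it via Lemma~\ref{L:34}(ii) and Lemma~\ref{L:26} (i.e.\ $\sharp F(X)=\sum_i \sharp F_{\bigcup_{j<i}W_j}(W_i)\ge \sum_i \sharp W_i$) and then, somewhat indirectly, deduces $\sharp W_i=1$ for all $i$ and invokes Lemma~\ref{L:unicity2.4}; you instead cite Theorem~\ref{T:32} for $\sharp F(X)\ge \sharp X$ and conclude immediately. Your route is shorter and avoids the detour through Lemma~\ref{L:unicity2.4}, while the paper's version has the minor bonus of recording explicitly that every $W_i$ is a singleton.
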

\begin{proof}
Using Lemma \ref{L:34} ``(ii)" and Lemma \ref{L:26}
\[
m 
= \sharp F(X) 
= \sum^m_{i=1} \sharp F_{\bigcup^{i-1}_{j=1}W_j}(W_i) 
\ge \sum^m_{i=1} \sharp W_i 
\ge m.
\]
This implies $\sharp W_i = 1$ for $i=1, \ldots, m$ and we apply Lemma \ref{L:unicity2.4}.
\end{proof}

                                        %
                                        %
\section{A Calculation Method} \label{S:algorithm}
We describe a method which calculates in finitely many steps the Hall partition of $F$. According 
to Theorem \ref{T:41} the Hall partition coincides with the alldifferent kernel, i.e., the alldifferent 
selections of $F$.

This type of algorithm had been described already by Easterfield \cite{Eas}. He proposed an 
algorithm, which finds the minimal sum over $n$ entries in an $n \times n$-matrix with real 
coefficients, where no two entries belong to the same row or the same column of the matrix.

Another algorithm had been proposed by R\'{e}gin \cite{Reg}. He starts with the calculation
of an alldifferent selection of $F$ and uses a necessary condition of Berge.

Description of the calculation method.

\begin{enumerate}[\qquad 1.]
\item 
Start with $F = F_{\emptyset}$ and $m=0$.
\medskip
\item
There exist an integer $m \ge 0$ and disjoint sets $W_1, \ldots, W_m$ in $X$ such that $W_i$ 
is a critical set of $F_{\bigcup^{i-1}_{j=1} W_j}$ for $i=1, \ldots, m$.
\medskip
\item
Choose $x \in X \backslash \bigcup^m_{j=1} W_j$ and define $W=\{x\}$.
\medskip
\item 
If $W$ satisfies $\sharp F_{\bigcup^{m}_{j=1} W_j}(W) < \sharp W$, STOP.   \\
Otherwise continue.
\medskip
\item 
If $W$ is a critical set of $F_{\bigcup^{m}_{j=1} W_j}$, define $W_{m+1} = W$,
set $m=m+1$ and continue with Step 8. Otherwise continue.
\medskip
\item 
If not yet all subsets $W^\prime$ of $X \backslash \bigcup^m_{j=1} W_j$ had been considered,
choose a new subset. If there exists a not yet considered subset 
$W^\prime \subset X \backslash \bigcup^m_{j=1} W_j$ 
of the same size like $W$, choose this $W^\prime$. Otherwise choose a set $W^\prime$ with one 
more element. Set $W=W^\prime$ and continue with Step 4.
\medskip
\item 
All subsets $W$ of $X \backslash \bigcup^m_{j=1} W_j$ had been considered. There is no critical set 
$W$ of $F_{\bigcup^{m}_{j=1} W_j}$. Define $W_{m+1} = X \backslash \bigcup_{j=1}^{m}W_j$,
set $m=m+1$ and STOP.
\medskip
\item 
If $\bigcup^{m}_{j=1} W_j = X$, STOP. Otherwise continue with Step 2.
\medskip
\end{enumerate}

The present calculation method consists of two loops. The inner loop from steps 4 - 6 determines a new 
critical set and the outer loop from steps 2 - 8 determines a Hall partition.

By definition of the calculation method it stops at one of the steps 4, 7 or 8. Therefore, after termination, 
$m \in \mathbb{N}$. If the calculation method does not stop at Step 4, it stops at Step 7 or Step 8 and 
terminates with sets $W_i \subset X$ for $i=1, \ldots, m$.

\begin{lemma} \label{L:CM.stops}
Let the calculation method not stop at Step 4.  \\
$(W_1, \ldots, W_m)$ defines a Hall partition of $F$.
\end{lemma}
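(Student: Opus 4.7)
The plan is to read off each of the three defining properties of a Hall partition (Definition~\ref{D:partition}) from specific invariants of the loop, after noting that ruling out the Step~4 exit restricts termination to Step~7 or Step~8. In both exit scenarios we obtain $m \ge 1$ and a tuple $(W_1, \ldots, W_m)$, which I would then analyze piece by piece.

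First I would check that $W_1, \ldots, W_m$ forms a partition of $X$. Disjointness is built in: each $W_i$ is chosen inside $X \backslash \bigcup_{j=1}^{i-1} W_j$ at Step~3 or Step~7. Coverage is automatic, since Step~8 fires only when $\bigcup_{j=1}^{m} W_j = X$, while Step~7 explicitly defines $W_m$ as the remaining complement. Nonemptiness of each block is clear since every block begins as a singleton at Step~3, and Step~7 is only reached after Step~2 is entered with a nonempty complement. Condition~(iii) of the definition --- that $W_i$ is critical in $F_{\bigcup_{j=1}^{i-1} W_j}$ for $i < m$ --- is immediate from Step~5, which is the only place where non-final blocks are created.

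The substance of the argument, and the step I expect to be the main obstacle, is deriving conditions~(i) and~(ii) from the inner loop's policy of inspecting subsets in order of increasing cardinality. For~(i), every singleton $\{x\}$ with $x \in X \backslash \bigcup_{j=1}^{i-1} W_j$ is examined before any larger set; because the algorithm never triggers Step~4, we get $\sharp F_{\bigcup_{j=1}^{i-1} W_j}(\{x\}) \ge 1$, so $F_{\bigcup_{j=1}^{i-1} W_j}$ has nonempty images on $W_i$. For~(ii), suppose $W_i$ has size $k$: if it was assigned at Step~5, then all subsets of $X \backslash \bigcup_{j=1}^{i-1} W_j$ of size less than $k$ had already been scanned and found non-critical, so in particular no proper subset of $W_i$ is critical in $F_{\bigcup_{j=1}^{i-1} W_j}$. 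If instead $W_m$ was assigned at Step~7, the inner loop has scanned every subset of $X \backslash \bigcup_{j=1}^{m-1} W_j$ without finding a critical one, so again no proper subset of $W_m$ is critical. Combining these verifications with condition~(iii) discharges all clauses of Definition~\ref{D:partition}.
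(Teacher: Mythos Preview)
Your proof is correct and follows essentially the same approach as the paper's own proof: both verify that the sets form a partition via Steps~6--8, derive condition~(iii) from Step~5, condition~(i) from the fact that singletons survive Step~4, and condition~(ii) from the increasing-size search policy in Step~6. Your version is simply more explicit, in particular by separating the Step~5 and Step~7 cases when checking non-reducibility, whereas the paper compresses all of this into a few lines.
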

\begin{proof}
By definition of the calculation method, $W_1, \ldots, W_m$ are disjoint (see Step 6 where 
$W^\prime \subset X \backslash \bigcup^m_{j=1} W_j$) and the union equals $X$ (see Step 7 
and Step 8). Condition ``(i)" of Definition \ref{D:partition} is satisfied by Step 4 (with singleton-sets
$W$). Condition ``(ii)" is satisfied, since $W$ is chosen with increasing size in Step 6 and 
Condition ``(iii)" is satisfied by Step 5. This shows $(W_1, \ldots, W_m)$ defines a Hall partition of $F$.
\end{proof}

\begin{lemma} \label{L:CM.Step4}
Let $F$ satisfy the Hall condition. The calculation method does not stop at Step 4.
\end{lemma}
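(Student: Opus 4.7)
The plan is to show by induction that at every visit to Step 4 the mapping $F_{\bigcup_{j=1}^{m} W_j}$ satisfies the Hall condition, which immediately rules out the STOP in that step.

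For the base case, at the first visit $m=0$, so $F_{\bigcup_{j=1}^{m} W_j} = F_{\emptyset} = F$, which satisfies the Hall condition by hypothesis. For the inductive step, I assume $F_{\bigcup_{j=1}^{m-1} W_j}$ satisfies the Hall condition upon entering some iteration with $m \ge 1$. The key point is that in Step 2 we have already recorded that $W_m$ is a critical set of $F_{\bigcup_{j=1}^{m-1} W_j}$ (this is the invariant re-stated in Step 2, which is established by Step 5 before $m$ is incremented). Hence Lemma \ref{L:23} applies to $F_{\bigcup_{j=1}^{m-1} W_j}$ together with its critical set $W_m$, yielding that $(F_{\bigcup_{j=1}^{m-1} W_j})_{W_m}$ satisfies the Hall condition. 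By Lemma \ref{L:21}~(ii), $(F_{\bigcup_{j=1}^{m-1} W_j})_{W_m} = F_{\bigcup_{j=1}^{m} W_j}$, so the induction continues.

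With the invariant in hand, the conclusion is immediate: when Step 4 is reached, $W \subset X \backslash \bigcup_{j=1}^{m} W_j$ and $F_{\bigcup_{j=1}^{m} W_j}$ satisfies the Hall condition, so $\sharp F_{\bigcup_{j=1}^{m} W_j}(W) \ge \sharp W$. Therefore the test $\sharp F_{\bigcup_{j=1}^{m} W_j}(W) < \sharp W$ fails and the method cannot stop at Step 4.

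I do not expect a real obstacle here; the only subtle point is to verify that the invariant stated in Step 2 truly holds whenever Step 4 is executed — that is, that each previously recorded $W_i$ really was confirmed to be a critical set of the corresponding complement mapping before being appended. This is guaranteed by the way Step 5 sets $W_{m+1}=W$ (a critical set by the test in that step) and only then increments $m$, so the invariant propagates cleanly through every pass of the outer loop.
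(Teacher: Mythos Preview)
Your proof is correct, but it takes a different route from the paper's own argument. The paper proves the inequality $\sharp F_{\bigcup_{j=1}^{m} W_j}(W) \ge \sharp W$ directly for the particular $W$ at hand by a counting computation: it invokes Lemma~\ref{L:32}~(ii) (applied to the partition $W_1,\ldots,W_m,W_{m+1}$ with $W_{m+1}=X\backslash\bigcup_{i=1}^{m}W_i$) together with the fact that each $W_i$, $i\le m$, is critical, so that the sums $\sum\sharp W_i$ and $\sum\sharp F_{\bigcup_{j=1}^{i-1}W_j}(W_i)$ cancel, and then applies the global Hall condition to $W\cup\bigcup_{i=1}^{m}W_i$.

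You instead prove the stronger invariant that $F_{\bigcup_{j=1}^{m} W_j}$ itself satisfies the Hall condition at every stage, by induction on $m$ via Lemma~\ref{L:23} and Lemma~\ref{L:21}~(ii). This is a cleaner, more conceptual argument: it isolates exactly the Halmos--Vaughan step (Lemma~\ref{L:23}) and iterates it, and it yields a statement about all subsets of $X\backslash\bigcup_{j=1}^{m}W_j$ rather than just the current $W$. The paper's approach, by contrast, avoids induction and stays closer to the explicit cardinality bookkeeping developed in Section~\ref{S:partition}. Both are short; yours is perhaps more transparent about why the method cannot fail.
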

\begin{proof}
Let $W_1, \ldots, W_m$ as defined by the calculation method and assume we are in Step 4. We define 
$W_{m+1} = X \backslash \bigcup_{i=1}^{m}W_i$. By definition of the calculation method
$W \subset W_{m+1}$ and $W_i$ is a critical set of $F_{\bigcup_{j=1}^{i-1} W_j}$ for $i=1, \ldots, m$. 
We apply Lemma \ref{L:32} ``(ii)" (with $m+1 \rightarrow m$) and obtain
\[
\sharp W
= \sharp W + \sum_{i=1}^{m}\sharp W_i - \sum_{i=1}^{m}\sharp F_{\bigcup_{j=1}^{i-1} W_j}(W_i)
\]
\[
= \sharp (W \cup \bigcup_{i=1}^{m}W_i) + \sharp F_{\bigcup_{j=1}^{m} W_j}(W) - \sharp F(W \cup \bigcup_{i=1}^{m}W_i)
\le \sharp F_{\bigcup_{j=1}^{m} W_j}(W).
\]
The calculation method does not stop at Step 4.
\end{proof}

\begin{theorem} \label{T:81}
The following statements are equivalent: 
\begin{enumerate}[(i)]
\item $F$ satisfies the Hall condition.   
\item The calculation method does not stop at Step 4.  
\item The calculation method stops at Step 7 or at Step 8 with a Hall partition of $F$. 
\end{enumerate}
\end{theorem}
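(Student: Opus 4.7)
The plan is to prove the equivalence cyclically in the order $(\mathrm{i}) \Rightarrow (\mathrm{ii}) \Rightarrow (\mathrm{iii}) \Rightarrow (\mathrm{i})$, since each implication is already set up by a prior lemma or theorem and the only genuine piece of work is verifying that the calculation method actually terminates.

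The first implication $(\mathrm{i}) \Rightarrow (\mathrm{ii})$ is precisely Lemma \ref{L:CM.Step4}, so it can be quoted directly with no further work. For $(\mathrm{ii}) \Rightarrow (\mathrm{iii})$ I would first observe that the inner loop (Steps 4--6) runs through subsets $W \subset X \backslash \bigcup_{j=1}^m W_j$ of strictly increasing size, and since $X$ is finite this loop is exhausted in finitely many iterations; it therefore exits either at Step 5 (producing $W_{m+1}$) or at Step 7 (producing $W_{m+1} = X \backslash \bigcup_{j=1}^m W_j$ and stopping). In both cases a new nonempty set $W_{m+1}$ is appended and $m$ is incremented, so each traversal of the outer loop strictly decreases $\sharp (X \backslash \bigcup_{j=1}^m W_j)$. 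Hence the outer loop reaches $\bigcup_{j=1}^m W_j = X$ (Step 8) in finitely many iterations, unless it stops earlier at Step 4 or at Step 7. Under assumption (ii), Step 4 is never reached as an exit, so the method halts at Step 7 or Step 8. Lemma \ref{L:CM.stops} then tells us that the output $(W_1, \ldots, W_m)$ is a Hall partition of $F$.

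The closing implication $(\mathrm{iii}) \Rightarrow (\mathrm{i})$ is a direct appeal to Theorem \ref{T:32}: any set-valued mapping that admits a Hall partition satisfies the Hall condition.

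The only step that requires care is the termination argument for $(\mathrm{ii}) \Rightarrow (\mathrm{iii})$, and even that is routine once one notes the two monotonicity facts above (the inner loop picks sets of nondecreasing size in a finite collection, and the outer loop strictly shrinks $X \backslash \bigcup_{j=1}^m W_j$ on every successful pass). All remaining content has been prepared in Lemmas \ref{L:CM.stops} and \ref{L:CM.Step4} and Theorem \ref{T:32}, so the proof reduces to stringing these results together.
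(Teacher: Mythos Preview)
Your proof is correct and follows essentially the same cyclic route $(\mathrm{i}) \Rightarrow (\mathrm{ii}) \Rightarrow (\mathrm{iii}) \Rightarrow (\mathrm{i})$ as the paper, invoking Lemma~\ref{L:CM.Step4}, Lemma~\ref{L:CM.stops}, and Theorem~\ref{T:32} in the same places. The only difference is that you spell out the termination argument explicitly, whereas the paper simply asserts (just before Lemma~\ref{L:CM.stops}) that the method stops at one of Steps~4, 7, or~8 and leaves the finiteness of the loops as evident from the description.
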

\begin{proof}
``(i) $\Rightarrow$ (ii)" This implication had been proved in Lemma \ref{L:CM.Step4}.  \\
``(ii) $\Rightarrow$ (iii)" Assume the calculation method does not stop at Step 4. Then the calculation 
method stops at Step 7 or at Step 8. There is no other alternative. By Lemma \ref{L:CM.stops} the 
calculation method determines a Hall partition of $F$. \\
``(iii) $\Rightarrow$ (i)" If the calculation method determines a Hall partition, then $F$ satisfies the Hall 
condition by Theorem \ref{T:32}.
\end{proof}

The exits in Step 7 and Step 8 are distinguish by $\sharp F(X)$.

\begin{lemma} \label{L:CM.Step7}
Let the calculation method stop at Step 7. $\sharp F(X) > \sharp X$.
\end{lemma}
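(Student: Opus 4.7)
The plan is as follows. When the calculation method halts at Step 7, I have already, by Lemma \ref{L:CM.stops}, a Hall partition $(W_1, \ldots, W_m)$ of $F$. I want to extract information specifically from the way Step 7 was reached, namely that the last component $W_m$ was added not because it had been verified as a critical set at Step 5, but because no critical set could be found among the subsets of the remaining index set.

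First I would fix notation carefully. Suppose that just before the final execution of Step 7, the current value of the counter is $m' \ge 0$ with critical sets $W_1, \ldots, W_{m'}$ already constructed, and set $R := X \backslash \bigcup_{j=1}^{m'}W_j$. Step 7 defines $W_{m'+1} := R$, increments the counter to $m := m'+1$, and stops. By construction, every subset of $R$ has been tested at Step 5 and rejected, so in particular $R$ itself fails the critical test: $W_m = R$ is \emph{not} a critical set of $F_{\bigcup_{j=1}^{m-1}W_j}$. This is the crucial piece of data that distinguishes Step 7 from Step 8.

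Now I invoke Lemma \ref{L:35} (ii), applied to the Hall partition $(W_1, \ldots, W_m)$: it states that $W_m$ is a critical set of $F_{\bigcup_{j=1}^{m-1}W_j}$ if and only if $\sharp F(X) = \sharp X$. Since $W_m$ fails to be critical, we conclude $\sharp F(X) \ne \sharp X$. To finish, I need the opposite inequality $\sharp F(X) \ge \sharp X$, which follows immediately from Theorem \ref{T:32}: since $F$ admits a Hall partition, it satisfies the Hall condition, and applying this condition to $W = X$ gives $\sharp F(X) \ge \sharp X$. Combining both yields $\sharp F(X) > \sharp X$.

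There is no real obstacle here; the only subtlety is being careful about the indexing convention between the algorithm (where $W_{m+1}$ is added and then $m$ is incremented) and the final labelling of the Hall partition. Once that bookkeeping is pinned down, the statement is a one-line corollary of Lemma \ref{L:35} (ii) together with Theorem \ref{T:32} (or Lemma \ref{L:CM.stops} combined with Theorem \ref{T:81}).
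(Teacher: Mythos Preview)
Your proof is correct and follows essentially the same approach as the paper: invoke Lemma~\ref{L:CM.stops} to obtain a Hall partition, observe from the nature of Step~7 that $W_m$ is not a critical set of $F_{\bigcup_{j=1}^{m-1}W_j}$, and then combine Lemma~\ref{L:35}~(ii) with Theorem~\ref{T:32} to conclude $\sharp F(X) > \sharp X$. The paper's version is simply more terse, omitting the bookkeeping about the counter increment that you spell out.
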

\begin{proof}
$(W_1, \ldots, W_m)$, $m \in \mathbb{N}$, is a Hall partition of $F$ by Lemma \ref{L:CM.stops} and 
$W_m$ is not a critical set of $F_{\bigcup^{m-1}_{j=1}W_j}$ (see Step 7). Using Lemma 
\ref{L:35} ``(ii)" and Theorem \ref{T:32}, $\sharp F(X) > \sharp X$.
\end{proof}

\begin{lemma} \label{L:CM.Step8}
Let the calculation method stop at Step 8. $\sharp F(X) = \sharp X$.
\end{lemma}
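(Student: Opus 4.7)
The plan is to combine the earlier structural result \ref{L:35} (ii) with a careful reading of the calculation method to identify how Step 8 is reached.

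First I would observe that Step 8 is only reachable from Step 5. Inspecting the flow of control, Steps 4, 7, and 8 are the only exit points, and the transitions are: Step 1 $\to$ 2, Step 2 $\to$ 3 $\to$ 4, Step 4 either stops or passes to Step 5, Step 5 either stops (going to 8) or passes to Step 6 $\to$ 4, Step 7 stops, and Step 8 stops or loops back to Step 2. In particular, control can only arrive at Step 8 immediately after Step 5 has just executed successfully. At that point, with $m$ denoting the new (incremented) value, the set $W_m$ was chosen precisely because it is a critical set of $F_{\bigcup_{j=1}^{m-1} W_j}$.

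Next I would invoke Lemma \ref{L:CM.stops}, which already gives that $(W_1, \ldots, W_m)$ is a Hall partition of $F$ whenever the calculation method does not stop at Step 4; in particular this holds when it stops at Step 8. This places us in the hypothesis of Lemma \ref{L:35} (ii).

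Finally, I would apply Lemma \ref{L:35} (ii): since $W_m$ is a critical set of $F_{\bigcup_{j=1}^{m-1} W_j}$ (from the observation above) and $(W_1, \ldots, W_m)$ is a Hall partition, the equivalence in that lemma yields $\sharp F(X) = \sharp X$, which is the desired conclusion.

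The only subtle point, and the one I would check carefully, is the bookkeeping on $m$ at Step 5: the index is incremented \emph{before} we proceed to Step 8, so the ``$m$'' used in Step 8 and in the statement of the lemma matches the ``$m-1$'' subscript needed in the hypothesis of Lemma \ref{L:35} (ii). Apart from this indexing check, the proof is immediate from Lemmas \ref{L:CM.stops} and \ref{L:35}.
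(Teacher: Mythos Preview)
Your proposal is correct and follows essentially the same approach as the paper: invoke Lemma~\ref{L:CM.stops} to obtain a Hall partition, observe that reaching Step~8 (via Step~5) forces $W_m$ to be a critical set of $F_{\bigcup_{j=1}^{m-1}W_j}$, and then apply Lemma~\ref{L:35}~(ii). Your careful control-flow analysis and indexing check are more explicit than the paper's terse ``(see Step~8)'', but the argument is the same.
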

\begin{proof}
$(W_1, \ldots, W_m)$, $m \in \mathbb{N}$, is a Hall partition of $F$ by Lemma \ref{L:CM.stops} and 
$W_m$ is a critical set of $F_{\bigcup^{m-1}_{j=1}W_j}$(see Step 8). Using Lemma \ref{L:35} ``(ii)", 
$\sharp F(X) = \sharp X$.
\end{proof}

The present calculation method calculates the Hall partition of $F$, i.e., the alldifferent selections of $F$. 
This result cannot be achieved with the famous algorithm of Hopcroft and Karp (see \cite{HK}) which 
determines a single alldifferent selection of $F$.

On the other side it is possible to extend the present calculation method and calculate a single alldifferent
selection. Once we know the Hall partition $(W_1, \ldots, W_m)$ we select points $x_i \in W_i$ and 
$y_i \in F_{\bigcup^{i-1}_{j=1}W_j}(x_i)$ for $i=1, \ldots, m$ and apply the calculation method to each
\[
{\left({(F_{\bigcup^{i-1}_{j=1}W_j})}_{\mid W_i}\right)}_{\{x_i\},\{y_i\}}, 1 \le i \le m.
\]
A consecutive use of this process determines an alldifferent selection of $F$. Moreover, we have 
some freedom in the selection of the points $x_i$ and $y_i$ which can be used to satisfy 
additional restrictions.

Under some circumstances we may have additional information on the size of the images $F(x)$. 
This can be used to simplify the search of a critical set $W$ in step 4 - 6 in the calculation method. 
It is possible to neglect sets $W$ with $\sharp W < min\{ \sharp F(x) \mid x \in X \}$.

\begin{lemma} \label{L:85}
Let $W \subset X$ be a critical set of $F$. 
\[
\sharp W \ge min\{ \sharp F(x) \mid x \in X \}.
\]
\end{lemma}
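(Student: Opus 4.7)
The plan is to use the definition of critical set together with the fact that $W$ is nonempty, and to exploit monotonicity of $F$ on subsets.

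First I would observe that since $W$ is critical, $W$ is nonempty and $\sharp F(W) = \sharp W$. Nonemptiness lets me pick some $x_0 \in W$. By the definition $F(W) = \bigcup_{x \in W} F(x)$, we have $F(x_0) \subset F(W)$, and therefore $\sharp F(x_0) \le \sharp F(W)$.

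Next, the minimum $\min\{\sharp F(x) \mid x \in X\}$ is bounded above by $\sharp F(x_0)$, since $x_0 \in W \subset X$. Chaining the two inequalities with the criticality equation gives
\[
\min\{\sharp F(x) \mid x \in X\} \;\le\; \sharp F(x_0) \;\le\; \sharp F(W) \;=\; \sharp W,
\]
which is the claim.

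There is no real obstacle here: the proof is a one-step unravelling of the definitions, and nothing from the preceding sections is needed beyond Definition \ref{D:critical} and the standing convention $F(W) = \bigcup_{x \in W}F(x)$. The only subtle point is to remember that $W$ must be nonempty (guaranteed by the definition of critical set), so that an $x_0 \in W$ exists and the minimum on the right-hand side is being compared against an actually attained value of $\sharp F(x)$.
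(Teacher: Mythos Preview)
Your proof is correct and follows essentially the same approach as the paper: both use criticality to get $\sharp W = \sharp F(W)$, the inclusion $F(x) \subset F(W)$ for $x \in W$, and the fact that $W \subset X$ to pass to the minimum over $X$. The paper phrases the middle step slightly differently by noting that $\sharp W \ge \sharp F(x)$ holds for every $x \in W$ (and hence $\sharp W \ge \max_{x \in W}\sharp F(x)$), whereas you fix a single $x_0 \in W$; the underlying argument is the same.
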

\begin{proof}
By definition of critical sets, $\sharp W = \sharp F(W) \ge \sharp F(x)$
for each $x \in W$. This implies 
\begin{align*}
\sharp W 
& \ge max\{ \sharp F(x) \mid x \in W \}  \\
& \ge min\{ \sharp F(x) \mid x \in W\}    \\
& \ge min\{ \sharp F(x) \mid x \in X\}
\end{align*}
and proves the statement.
\end{proof}

                                        %
                                        %
\section{Application to Sudoku} \label{S:Sudoku}
A Sudoku is a square consisting of a 9$\times$9 grid which is
partly pre-populated by numbers between 1 and 9 called the givens.
The problem consists of finding numbers between 1 and 9 for all
unpopulated cells, such that each row, each column and each block 
consists of exactly the numbers $1, \ldots, 9$. The blocks of a Sudoku 
partition the Sudoku square into subsquares of size 3$\times$3. Each 
Sudoku consists of 9 rows, 9 columns and 9 blocks.

We set up a mathematical model for Sudoku puzzles. Let 
\[
G=\{(c_1, c_2) \mid c_1, c_2 \mbox{ are positive integers, }1 \le c_1 \le 9 \mbox{ and } 1 \le c_2 \le 9\}
\]
be the grid, i.e., the set of all cells. This set can be partitioned into the sets $G_p$ of populated cells 
and $G_u$ of unpopulated cells. For each populated cell $c \in G_p$ the given in cell $c$ is denoted by 
$g_c \in \{1, \ldots, 9\}$.

Let $row(c)$, $column(c)$ and $block(c)$ denote the row, column and block containing a 
cell $c\in G$. Define
\[
n(c)=row(c) \cup column(c) \cup block(c),
\]
i.e., $n(c)$ describes the union of the row, column and block, which contain the cell $c \in G$. 

For each populated cell $c \in G_p$ we define $F(c)=\{g_c\}$. For each unpopulated cell $c \in G_u$, 
let $F(c)$ be the markup of $c$, i.e., 
\[
F(c) = \{ y \in \{1, \ldots, 9\} \mid y \ne g_d\mbox{ for each cell }d \in n(c) \cap G_p \}.
\]
Let $C$ be any row, column or block of the Sudoku grid. We restrict ourselves to the unpopulated cells
in $C$, i.e., we set $X = C \cap G_u$, set $Y=\bigcup_{c \in X}F(c)$ and consider the set-valued 
mapping $F_{\mid X}: X \longrightarrow 2^Y$. Then $X$ and $Y$ are finite nonempty sets and we 
can apply the calculation method of Section \ref{S:algorithm} to $F_{\mid X}$.

\begin{figure}
\includegraphics{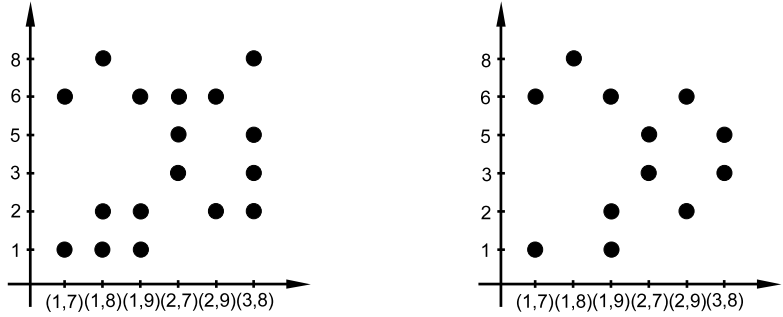}
\caption{Shortz' Example 301 with the calculation method.} 
\label{Fig5}
\end{figure}

We consider a continuation of Crook \cite[Fig. 7]{Cro} of Example 301 of Shortz \cite{Sho}
and choose $C$ as the upper right block. Then 
\[
X=\{(1, 7), (1, 8), (1, 9), (2, 7), (2, 9), (3, 8)\}
\]
and $Y=\{1, 2, 3, 5, 6, 8\}$. The mapping $F$ is depicted in Fig. \ref{Fig5} (left) and the 
application of the calculation method in Section \ref{S:algorithm} results in the following steps.

The set $W_1=\{(1, 7), (1, 9), (2, 9)\}$ describes a critical and non-reducible set of $F$ with
$F(W_1)=\{1, 2, 6\}$. The set $W_2=\{(1, 8)\} \subset X \backslash W_1$ describes
a critical and non-reducible set of $F_{W_1}$ with $F_{W_1}(W_2)=\{8\}$. Finally, the set
$W_3 = \{(2, 7), (3, 8)\} \subset (X \backslash W_1) \backslash W_2$ describes a critical and 
non-reducible set of $(F_{W_1})_{W_2} = F_{W_1 \cup W_2}$ (see Lemma \ref{L:21}) with 
$F_{W_1 \cup W_2}(W_3)=\{3, 5\}$. The Hall partition of $F$ consists of $(W_1,W_2,W_3)$ and 
is depicted on the right of Fig. \ref{Fig5}.

Knowing all alldifferent selections of $F$ we see that $F^*((1, 8))$ consists of the single value $8$, i.e.,
the cell $(1, 8)$ contains the value $8$.

The calculation method from Section \ref{S:algorithm} provides the same result like the strategies proposed 
by Crook \cite{Cro} using preemptive sets and Provan \cite{Pro} using a pigeon-hole rule. This calculation 
method provides a completion and theoretical foundation of these strategies.

                                        %
                                        %

\end{document}